    \newtheorem{thm}{Theorem}                     [section]
    \newtheorem{thm*}{Theorem}
    \newtheorem{lemma*}{Lemma}    
    \newtheorem{conj}{Conjecture}                 
    \newtheorem{defn}[thm]{Definition}                 
\newcommand*{\norm}[1]{\left\Vert#1\right\Vert}    
\newcommand{\hilb}{\mathcal H}
\newcommand{\Scal}[1]{\left\langle #1\right\rangle}               
\newcommand{\mb}{\mathbb}
\newcommand{\euR}{\EuScript R}
\newcommand{\clA}{\mathcal A}
\newcommand{\clK}{\mathcal K}
\newcommand{\clV}{\mathcal V}
\newcommand{\mbC}{\mathbb C}
\newcommand{\mbD}{\mathbb D}
\newcommand{\mbR}{\mathbb R}
\newcommand{\ran}{\operatorname{ran}}
\newcommand{\spec}{\operatorname{spec}}
\newcommand{\Id}{\operatorname{Id}}
\newcommand{\eps}{\varepsilon}
\begin{document}
\title[On essentially singular points]{Spectral flow inside essential spectrum VI: \\  on essentially singular points}
\author{Nurulla Azamov}
\address{Independent scholar, Adelaide, SA, Australia}
\email{azamovnurulla@gmail.com}
 \keywords{limiting absorption principle, singular spectrum, spectral flow}
 \subjclass[2000]{ 
     Primary 47A40;
 }
\begin{abstract} 
Let $H_0$  be a self-adjoint operator on a Hilbert space $\mathcal H$ endowed with a rigging $F,$ which is a zero-kernel closed operator from $\mathcal H$ to another Hilbert space $\mathcal K$
such that the sandwiched resolvent $F (H_0 - z)^{-1}F^*$ is compact. Assume that $H_0$ obeys the limiting absorption principle (LAP) in the sense that the norm limit 
$F (H_0 - \lambda - i0)^{-1}F^*$ exists for a.e.~$\lambda.$ Numbers~$\lambda$ for which such limit exists we call $H_0$-regular. A number~$\lambda$ we call semi-regular,
if the limit $F (H_0 + F^*JF - \lambda - i0)^{-1}F^*$ exists for at least one bounded self-adjoint operator $J$ on $\mathcal K;$ otherwise we call~$\lambda$ essentially singular. 

In this paper I discuss essentially singular points. In particular, I give different conditions which ensure that a real number~$\lambda$ is essentially singular,
and discuss their relation to eigenvalues of infinite multiplicity which are known examples of essentially singular points. 
\end{abstract}
\maketitle

\tableofcontents

\bigskip 

\section{Introduction}

The limiting absorption principle (LAP) is an important tool in the spectral theory of self-adjoint operators, see e.g. \cite{AMG, BW, Kur,YaBook}.
It asserts that a properly regularised resolvent $(H-z)^{-1}$ of a self-adjoint operator $H$ converges in an appropriately chosen topology as the spectral parameter $z = \lambda + iy$
approaches the essential spectrum, $\sigma_{ess},$ of $H.$ The regularisation of the resolvent is achieved by a choice of a  \emph{rigging} in the Hilbert space. 
A rigging in an abstract Hilbert space $\hilb$ can come in the form of a triple of spaces
$X_- \subset \hilb \subset X_+$ with continuous embeddings, where $X_\pm$ are in general locally convex spaces but usually Banach spaces, or in the form of a fixed operator acting on $\hilb.$
Many natural Hilbert spaces are endowed with a natural choice of a rigging, which may however depend on some parameters. For instance, in $L^2(\mbR^d, dx)$ a natural rigging is given by the operator of multiplication by 
a function $(1 + |x|^2)^{-\delta/4}, \delta > 0,$ which corresponds to a triple of spaces $$L^2(\mbR^d,  (1 + |x|^2)^{\delta/2}\,dx) \  \subset \ L^2(\mbR^d,\,dx)  \ \subset \ L^2(\mbR^d,\,(1 + |x|^2)^{- \delta/2}dx ),$$
see e.g.~\cite{Agm}, but two particular values of $\delta$ are special: $\delta = 1$ and $\delta = d.$

There are versions of LAP which differ in other aspects, such as a choice of the  topology of convergence. Another point of difference is whether the convergence is supposed to be everywhere in the essential spectrum or a.e.,
and in the former case whether the convergence is uniform on compact subsets of $\sigma_{ess}$ and moreover whether the regularised resolvent admits analytic continuation through the essential spectrum. 
Examples of operators from this scenario include Schr\"odinger operators with short range potentials and periodic Schr\"odinger operators, see e.g. \cite{Agm,BeShu}.
But there are types of self-adjoint operators for which LAP can hold only almost everywhere.  Random Schr\"odinger operators are an important class of operators of this type.

We will say that LAP holds at a particular point $\lambda \in \mbR,$ if the regularised resolvent $(H-\lambda-iy)^{-1}$ converges as $y \to 0^+;$ in this case we will also say that~$\lambda$ is a regular point for $H.$
Even if LAP fails for $H$ at a point~$\lambda,$ it is possible that for an admissible, --- in a sense depending on the rigging, perturbation of $H$ LAP holds at~$\lambda.$ Such points we will call \emph{semi-regular}. 
Semi-regular points were introduced in \cite{AzSFIES} where they were called \emph{essentially regular}.
Semi-regularity of a point~$\lambda$ 
is a property of not a single self-adjoint operator, but rather of a space of operators, which consists of all admissible perturbations of $H.$ A point~$\lambda$ which is not semi-regular, we will call \emph{essentially singular}.

Given a norm-continuos path of self-adjoint operators with a common essential spectrum and a point~$\lambda$ outside the essential spectrum, there is a well-defined notion of spectral flow for the path which counts the net number of crossings  of the path eigenvalues through the point~$\lambda,$ --- this notion has its origins in geometry \cite{APSIII} and operator theory \cite{Kr53}. As it turns out, for a semi-regular point~$\lambda$ inside the essential spectrum the notion of spectral flow admits a natural generalisation,
see \cite{AzSFIES,AzSFnRI,AzDaMN}.   
For this reason semi-regular points were studied in depth in \cite{AzSFIES} and other papers of that series, while essentially singular points were largely ignored. But essentially singular points are very interesting, --- in a way they are genuine singularities of a 
self-adjoint operator acting on a rigged Hilbert space, since essential singularity of a point~$\lambda$ is permanent, while singularity at a semi-regular point is transitory. 
In addition to this, their possible existence proves to be a hindrance for studying self-adjoint operators, and for this reason alone they cannot be ignored. The aim of this paper is to discuss  essentially singular points, and to try to identify different reasons which  cause a real number to be essentially singular. 

In this paper I define four types of real numbers, associated  with a real affine space of self-adjoint  operators on a rigged Hilbert space. 
They are called $(I)$-singular, $(L)$-singular, $(N)$-singular and $(F)$-singular. It is proved that real numbers of each of these types are essentially singular. 
It is also shown that any $(I)$-singular number is $(F)$-singular. 

Definitions of these types of essentially singular numbers are as follows.
A real number is $(I)$-singular if it is an eigenvalue of infinite multiplicity of at least one self-adjoint operator from the space. A real number~$\lambda$ is $(L)$-singular, if infinitely many eigenvalues of the regularised resolvent, $T_z(H),$ 
of at least one operator $H$ from the space go to infinity as the spectral parameter $z$ approaches $\lambda + i0$ non-tangentially. $(N)$-singular numbers are defined similarly using $s$-numbers of a compact operator instead of its eigenvalues. 
Finally, $(F)$-singular numbers~$\lambda$ are defined using a regularised generalisation of the notion of eigenspace of a number~$\lambda,$ -- if the eigenspace is infinite-dimensional for at least one operator from the affine space 
then~$\lambda$ is $(F)$-singular. 

How these types of essentially singular numbers relate to each other is an open problem, but I believe that $(L)$-, $(N)$- and $(F)$-singular numbers have a good chance to be the same.

\section{Preliminaries}
A \emph{rigged Hilbert space} is a pair $(\hilb,F)$ which consists of a (complex separable) Hilbert space $\hilb,$ 
endowed with a rigging in the form of a zero-kernel and zero co-kernel closed operator $F$ from $\hilb$ to possibly another Hilbert space~$\clK,$
which we call an auxiliary Hilbert space, despite the fact that the majority of operators considered here will act on~$\clK.$ This is not the only way to introduce a rigging into a Hilbert space, but it is an easy to work with version  which suits our needs. 
We say that a self-adjoint operator, $H_0,$ on $\hilb$ acts on a rigged Hilbert space $(\hilb,F),$ if $F$ is $|H_0|^{1/2}$-compact, which means that $F (1+|H_0|)^{-1/2}$ is compact.
Or, in this situation we may also say that a self-adjoint operator $H_0$ and a rigging $F$ are compatible. 
In this case the sandwiched resolvent,
$$
   T_z(H_0) := F R_z(H_0) F^*
$$
is a compact operator on~$\clK.$ Further, let $\clA_0$ be the real vector space of self-adjoint operators of the form $F^* J F,$ where $J$ is a bounded self-adjoint operator on~$\clK,$
and let 
$$
    \clA := H_0 + \clA_0
$$
be an associated real affine space of self-adjoint operators. 
In this case for all operators $H$ from $\clA$ the sandwiched resolvent $T_z(H)$ is also compact. We will postulate that all operators from $\clA$ act on the rigged Hilbert space, in the sense above.
This is not a big restriction, see  \cite{DaThesis}.

By Weyl's theorem, all operators from $\clA$ share a common essential spectrum, which we denote $\sigma_{ess}(\clA),$ or simply $\sigma_{ess},$ since we will be working with a fixed affine space $\clA.$

We say that an operator $H$ from $\clA$ obeys the \emph{limiting absorption principle (LAP)}, if the norm limit 
\begin{equation} \label{F: T(lamb+i0)}
    T_{\lambda + i0} (H) := \lim_{y \to 0^+}     T_{\lambda + iy} (H)
\end{equation}
exists for almost every real number~$\lambda.$ If for a real number~$\lambda$ the limit $T_{\lambda + i0} (H)$ exists then we say that~$\lambda$ is $H$-regular, otherwise we call it $H$-singular. 
We may also say that in this case $H$ is~$\lambda$-regular, or otherwise that it is~$\lambda$-singular. 
It is a well-known and simple result that the complement of the set $\Lambda(H)$ is a core of singular spectrum, in the sense that the operator $H E_{\Lambda(H)} (H)$ is absolutely continuous. 
In particular, the set $\Lambda(H)$ does not contain eigenvalues of $H.$ For this reason, the terminology $H$-singular for a real number $\lambda \notin \Lambda(H)$ is justified. 

We note the following theorem from \cite{AzLAP}.

\begin{thm} \label{T1}
 If at least one operator from $\clA$ obeys LAP, then so does every other operator from $\clA.$
\end{thm}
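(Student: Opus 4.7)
The plan is to argue by the second resolvent identity: every operator in $\clA$ has the form $H'=H+F^*JF$ for some bounded self-adjoint $J$ on $\clK$, where $H$ is the given LAP operator, so it suffices to transfer LAP from $H$ to $H'$. Sandwiching the identity $R_z(H')=R_z(H)-R_z(H')F^*JF R_z(H)$ by $F$ and $F^*$ yields, for $\Im z>0$, the operator equation
\begin{equation*}
T_z(H')\bigl(I+J T_z(H)\bigr)=T_z(H),
\end{equation*}
and hence $T_z(H')=T_z(H)\bigl(I+J T_z(H)\bigr)^{-1}$; the inverse exists in the upper half-plane because $H'$ is self-adjoint, so $T_z(H')$ is a bounded operator, and $J T_z(H)$ is compact.

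Next, I fix $\lambda$ in the full-measure set $\Lambda(H)$ of $H$-regular points, so that $T_{\lambda+iy}(H)\to T_{\lambda+i0}(H)$ in operator norm as $y\to 0^+$, and $T_{\lambda+i0}(H)$ is compact on $\clK$ (being a norm limit of compact operators). By continuity of inversion on the open set of invertible operators, the formula from Step 1 shows that $T_{\lambda+i0}(H')$ exists in norm as soon as $I+J T_{\lambda+i0}(H)$ is invertible on $\clK$. Since $J T_{\lambda+i0}(H)$ is compact, the Fredholm alternative reduces this condition to the requirement that $-1$ is not an eigenvalue of $J T_{\lambda+i0}(H)$.

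The main obstacle is therefore to prove that the exceptional set
\begin{equation*}
S:=\bigl\{\lambda\in\Lambda(H):\,-1\text{ is an eigenvalue of }J T_{\lambda+i0}(H)\bigr\}
\end{equation*}
has Lebesgue measure zero. For this I would exploit the analyticity of $z\mapsto I+J T_z(H)$ in $\{\Im z>0\}$ as an $I+\text{compact}$-valued function together with the analytic Fredholm theorem: $(I+J T_z(H))^{-1}$ exists for $|\Im z|$ large (Neumann series, since $\|T_z(H)\|\to 0$), so it is meromorphic on the upper half-plane and the associated regularized Fredholm determinant $\Delta(z)$ is a nonzero bounded analytic function whose boundary-value zero set on $\Lambda(H)$ is precisely $S$. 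The classical uniqueness theorem for boundary values of bounded analytic functions on a half-plane (Fatou / F.~and M.~Riesz) then forces $|S|=0$.

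Putting the three steps together, $\Lambda(H')\supseteq\Lambda(H)\setminus S$ has full measure, so $H'$ obeys LAP. Since $H\in\clA$ was arbitrary among LAP operators and $H'\in\clA$ was arbitrary, the theorem follows. I expect Step~3 to carry the technical weight of the proof: while Steps~1 and 2 are formal applications of the resolvent identity and the Fredholm alternative, justifying the use of a determinant (or an equivalent substitute, e.g.\ $\det_2$ when $T_z(H)$ is merely Hilbert--Schmidt, or a direct argument on analytic families of eigenvalues of $J T_z(H)$) is the delicate point.
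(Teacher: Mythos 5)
Your Steps 1--2 are correct and are the standard opening move: sandwiching the second resolvent identity gives $T_z(H')\bigl(1+JT_z(H)\bigr)=T_z(H)$, the inverse $\bigl(1+JT_z(H)\bigr)^{-1}=1-JT_z(H')$ exists for all nonreal $z$, and at an $H$-regular $\lambda$ the limit $T_{\lambda+i0}(H')$ exists whenever $-1$ is not an eigenvalue of $JT_{\lambda+i0}(H)$. But this reduction is not where the theorem lives; its entire content is your Step 3, the claim that the exceptional set $S$ is Lebesgue-null, and that step is only sketched, by a route that fails under the paper's hypotheses. The framework assumes only that $T_z(H)=FR_z(H)F^*$ is \emph{compact} --- no Schatten-ideal condition whatsoever --- so neither $\det$ nor any regularized determinant $\det_p$ of $1+JT_z(H)$ is defined, and there is no function $\Delta(z)$ to which analytic Fredholm/boundary-value theory could be applied. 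Even if one grants, say, a Hilbert--Schmidt assumption so that $\Delta(z)=\det_2\bigl(1+JT_z(H)\bigr)$ makes sense, two further assertions remain unjustified: (i) that $\Delta$ is bounded (or at least of bounded characteristic) in the upper half-plane, which is exactly what the Fatou/Privalov uniqueness theorem requires and which for perturbation determinants is normally established only under trace-type hypotheses; and (ii) that the boundary zero set of $\Delta$ on $\Lambda(H)$ is $S$: LAP gives convergence of $T_{\lambda+iy}(H)$ only in operator norm, whereas continuity of $\det_2$ needs Hilbert--Schmidt convergence, so $\Delta(\lambda+iy)\to\det_2\bigl(1+JT_{\lambda+i0}(H)\bigr)$ does not follow. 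Your fallback of tracking ``analytic families of eigenvalues'' of $JT_z(H)$ runs into the same wall: the eigenvalues are not globally analytic, and controlling the boundary set where an eigenvalue hits exactly $-1$ is precisely the difficulty being deferred.

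Note also that the paper does not prove Theorem \ref{T1} at all; it quotes it from \cite{AzLAP}, so there is no in-paper proof to match, and the fact that the result is the subject of a separate paper is itself a signal that the measure-zero statement in Step 3 is the genuinely hard part at this level of generality. A classical alternative for Step 3 (Kato--Kuroda/Rellich style) is to show that for a.e.\ $\lambda\in\Lambda(H)$ a nonzero solution of $\bigl(1+JT_{\lambda+i0}(H)\bigr)u=0$ yields an eigenvector of $H'$ at $\lambda$, so that $S$ is, up to a null set, contained in the countable point spectrum of $H'$; but that implication also requires a real argument in the abstract rigged setting and is not in your proposal. As it stands, the proof has a genuine gap at its central point.
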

This theorem shows that LAP is an attribute of the whole real affine space~$\clA$ of operators, rather than being property of a single self-adjoint operator. 
Let $\Lambda(H) = \Lambda(H,F)$ be the set of all $H$-regular numbers~$\lambda.$ Theorem \ref{T1} asserts that if $\Lambda(H)$ is a full set for one operator $H$ from $\clA$
then it is a full set for any operator from $\clA.$ Let 
$$
    \Lambda(\clA) = \bigcup_{H\in \clA} \Lambda(H). 
$$
Real numbers from the set $\Lambda(\clA)$ we call \emph{semi-regular}, or \emph{essentially regular}, or \emph{$\clA$-regular}. 
Real numbers from the complement to the set $\Lambda(\clA)$ we call \emph{essentially singular}.

The definition of a semi-regular number~$\lambda$ asserts that for at least one $H$ from~$\clA$ 
the limit  \eqref{F: T(lamb+i0)} exists. 
In this regard, the following theorem holds, see \cite{AzSFIESIV}.
\begin{thm}  \label{T2}
Let $H_0 \in \clA$ and $\lambda \in \mbR.$ If~$\lambda$ is semi-regular, then the limit 
$T_{\lambda+i0}(H_0 + rF^*F)$ exists for all $r \in \mbR$ except a discrete set. 
\end{thm}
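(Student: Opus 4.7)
The plan is to translate the existence of $T_{\lambda+i0}(H_0+rF^*F)$ into an invertibility problem for a polynomial operator pencil and then invoke the analytic Fredholm theorem. Let $H \in \clA$ be a $\lambda$-regular operator, which exists by semi-regularity; write $H = H_0 + F^*J_0F$ with $J_0$ bounded self-adjoint on $\clK$, and set $T := T_{\lambda+i0}(H)$. For $y>0$ one has
$$\Im T_{\lambda+iy}(H) \;=\; y\,\bigl(FR_{\lambda+iy}(H)\bigr)\bigl(FR_{\lambda+iy}(H)\bigr)^{\!*} \;\geq\; 0,$$
so the norm limit satisfies $\Im T \geq 0$; moreover $T$ is compact. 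Writing $H_0+rF^*F = H + F^*(rI - J_0)F$, the second resolvent identity gives, for $\Im z > 0$,
$$T_z(H_0+rF^*F) \;=\; T_z(H)\bigl(I + (rI - J_0)\,T_z(H)\bigr)^{-1}.$$
Sending $z \to \lambda+i0$ in norm therefore reduces the existence of $T_{\lambda+i0}(H_0+rF^*F)$ to invertibility of the pencil $\Phi(r) := I + (rI - J_0)\,T$.

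Next, $\Phi$ is an entire operator-valued polynomial of degree one in $r$ with $\Phi(r) - I$ compact, so each $\Phi(r)$ is Fredholm of index zero. By the analytic Fredholm theorem, once $\Phi(r_0)$ is invertible at a single point $r_0 \in \mbC$, the inverse $\Phi(r)^{-1}$ is meromorphic on $\mbC$ and its singularities form a discrete set; intersecting with $\mbR$ yields the conclusion of the theorem.

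The main obstacle is producing that one invertibility point. I would take $r_0 = is$ with $s>0$. If $\Phi(is)x = 0$ then $x = (J_0 - isI)\,Tx$, and pairing with $Tx$ gives
$$\langle J_0 Tx,\,Tx\rangle \;-\; is\,\|Tx\|^2 \;=\; \langle x,\,Tx\rangle.$$
Self-adjointness of $J_0$ makes $\langle J_0 Tx, Tx\rangle$ real, so extracting imaginary parts yields
$$-s\,\|Tx\|^2 \;=\; \langle x,\,(\Im T)\,x\rangle.$$
The right-hand side is nonnegative since $\Im T \geq 0$, while for $s>0$ the left-hand side is nonpositive, so both vanish. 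Thus $Tx = 0$ and then $x = (J_0 - isI)\,Tx = 0$, so $\ker \Phi(is) = 0$; Fredholm index zero then delivers invertibility. The essential ingredient is the Herglotz-type boundary positivity $\Im T \geq 0$ inherited from $T_z(H)$; without it there is no obvious reason for $\Phi$ to be invertible anywhere on $\mbC$, and no analogous certificate is visible on the real axis of the $r$-parameter alone.
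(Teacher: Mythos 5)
Your overall strategy is sound and, in fact, is the natural one: reduce existence of $T_{\lambda+i0}(H_0+rF^*F)$ via the sandwiched second resolvent identity to invertibility of the affine pencil $\Phi(r)=I+(rI-J_0)T$ at the boundary value $T=T_{\lambda+i0}(H)$, then invoke the analytic Fredholm theorem once one invertibility point is exhibited, and intersect the resulting discrete singular set with $\mbR$. (The paper itself gives no proof of this theorem --- it quotes it from part IV of the series --- so I am judging your argument on its own terms.) The genuine problem is exactly at the step you yourself single out as the main obstacle: the certificate point. Your sign bookkeeping is inconsistent. With an inner product linear in the first slot (which is what your expansion $\langle (J_0-is)Tx,Tx\rangle=\langle J_0Tx,Tx\rangle-is\|Tx\|^2$ presupposes), one has $\Im\langle x,Tx\rangle=-\langle x,(\Im T)x\rangle$, not $+\langle x,(\Im T)x\rangle$; carried through consistently, the identity you obtain is $\langle (\Im T)x,x\rangle = s\,\|Tx\|^2$, where \emph{both} sides are nonnegative for $s>0$, so nothing follows. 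This is not a mere slip of presentation: the assertion ``$\ker\Phi(is)=0$ for $s>0$'' is false at the level of generality at which you use it. Take $J_0=0$ and $T=(i/s)\langle\cdot,e\rangle e$ with $e$ a unit vector; then $T$ is compact, $\Im T=(1/s)\langle\cdot,e\rangle e\ge 0$, yet $\Phi(is)=I-\langle\cdot,e\rangle e$ is not invertible. (Such purely imaginary positive boundary values do occur, e.g.\ for the free one-dimensional Schr\"odinger operator, where the boundary value of the sandwiched resolvent at $\lambda>0$ is essentially $i/(2\sqrt{\lambda})$ times a positive operator.) Under the paper's conventions, $R_z(H)=(H-z)^{-1}$ with $z\to\lambda+i0$ from the upper half-plane, so $\Im T\ge 0$ and the Herglotz positivity pushes the singular points of the pencil into the \emph{closed upper} half-plane of the coupling variable $r$; your certificate is chosen in precisely the wrong half-plane.

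The repair is one line: take any $r_0$ with $\Im r_0<0$. If $\Phi(r_0)x=0$, i.e.\ $x=(J_0-r_0)Tx$, then pairing consistently gives $\langle (\Im T)x,x\rangle=(\Im r_0)\,\|Tx\|^2\le 0$, while $\Im T\ge0$ makes the left side nonnegative; hence $Tx=0$ and then $x=(J_0-r_0)Tx=0$, so $\Phi(r_0)$ is injective and, being a compact perturbation of the identity, invertible. With this correction the rest of your argument goes through: analytic Fredholm on the connected domain $\mbC$ makes the non-invertibility set discrete and closed, its trace on $\mbR$ is discrete, and for every other real $r$ the norm convergence $T_{\lambda+iy}(H)\to T$ together with continuity of inversion yields $T_{\lambda+iy}(H_0+rF^*F)\to T\,(I+(rI-J_0)T)^{-1}$. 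You should also add a word justifying the identity $T_z(H')=T_z(H)-T_z(H')(rI-J_0)T_z(H)$ when $F$ is unbounded (it follows from $F$ being $|H|^{1/2}$-compact, so that $FR_z$ is bounded), but that is standard in this framework.
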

This theorem shows that if~$\lambda$ is semi-regular, then every point of an affine line of the form $H_0 + \mbR F^*F$ in the affine space $\clA,$ except a discrete set, is~$\lambda$-regular. 
Further, for a semi-regular number~$\lambda$ the set, $\euR(\lambda),$ of~$\lambda$-singular numbers in $\clA$ is an analytic variety. This variety has co-dimension $1$ in the case of $\lambda \notin \sigma_{ess}(\clA),$ 
and co-dimension $>1$ in the case of $\lambda \in \sigma_{ess}(\clA),$ see \cite{AzSFnRI, AzSFIESII}.

As the following theorem shows, see \cite{AzSFIES},  essentially singular numbers exist.

\begin{thm}   \label{T: infinite mult eigenvalue}
If a real number~$\lambda$ is an eigenvalue of infinite multiplicity for at least one operator from $\clA$ then it is essentially singular. 
\end{thm}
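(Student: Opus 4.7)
The plan is to argue by contradiction. Suppose $\lambda$ is $H'$-regular for some $H' = H + F^*JF \in \clA$, so that $T_z(H') \to T_0$ in norm as $z = \lambda + iy \to \lambda + i0$; in particular $\mathrm{Im}(T_z(H'))$ converges in norm to the compact operator $\mathrm{Im}(T_0)$. I will force a contradiction by showing that $\mathrm{Im}(T_z(H'))$ retains too many large singular values for any such compact limit to be possible. Set $P := E_{\{\lambda\}}(H)$, infinite-dimensional by hypothesis. Since $F(1+|H|)^{-1/2}$ is compact and $|H|$ acts as the scalar $|\lambda|$ on $P\mathcal{H}$, the operator $FP$ is compact; as $F$ has zero kernel it is also injective on $P\mathcal{H}$, so $M := FPF^* = (FP)(FP)^*$ is a compact positive self-adjoint operator on $\mathcal{K}$ of infinite rank, with eigenvalues $\mu_1 \geq \mu_2 \geq \cdots > 0$ tending to zero.

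Next I collect two identities. The spectral theorem gives $|R_z(H)|^2 \geq P/y^2$, hence
\[\mathrm{Im}(T_z(H)) \;=\; y\,F|R_z(H)|^2 F^* \;\geq\; M/y.\]
Sandwiching the second resolvent identity by $F$ and $F^*$ produces the Aronszajn--Donoghue-type relation $T_z(H')^{-1} = T_z(H)^{-1} + J$ on an appropriate dense domain; taking imaginary parts and using $J^* = J$ together with $\mathrm{Im}(A^{-1}) = -A^{-1*}\,\mathrm{Im}(A)\,A^{-1}$ yields
\[\mathrm{Im}(T_z(H')) \;=\; B_z^*\,\mathrm{Im}(T_z(H))\,B_z, \qquad B_z := I - JT_z(H'), \quad B_z^{-1} = I + JT_z(H).\]
Under the hypothesis $B_z \to B_0 := I - JT_0$ in norm; being of the form $I$ minus a compact operator, $B_0$ is Fredholm of index zero, with finite-dimensional kernel of some dimension $k \geq 0$ and closed range of codimension $k$.

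The decisive step is a min-max estimate that transports the lower bound on $\mathrm{Im}(T_z(H))$ through the sandwich by $B_z$. Because $B_0$ restricted to $(\ker B_0)^\perp$ is bounded below, for $y$ sufficiently small the restriction $B_z|_{(\ker B_0)^\perp}$ is injective and bounded below by a constant $c_0 > 0$ independent of $y$. Let $W_n$ denote the span of the top $n$ eigenvectors of $M$. The intersection $W_{m+k} \cap \mathrm{ran}\bigl(B_z|_{(\ker B_0)^\perp}\bigr)$ has dimension at least $m$, so its preimage $V \subset (\ker B_0)^\perp$ under $B_z|_{(\ker B_0)^\perp}$ is a subspace of dimension at least $m$ on which $\|B_z\xi\| \geq c_0 \|\xi\|$ and $B_z\xi \in W_{m+k}$. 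The min-max characterisation of the eigenvalues of the positive operator $B_z^*(M/y)B_z$ then delivers
\[s_m(\mathrm{Im}(T_z(H'))) \;\geq\; \frac{c_0^2\,\mu_{m+k}}{y}, \qquad m \geq 1,\]
for all sufficiently small $y$. Consequently, for every fixed $t > 0$ the count $\#\{m : s_m(\mathrm{Im}(T_z(H'))) \geq t\}$ is bounded below by $\#\{n : \mu_n \geq ty/c_0^2\} - k$, which tends to infinity as $y \to 0^+$ because every $\mu_n$ is strictly positive. This contradicts norm convergence of $\mathrm{Im}(T_z(H'))$ to the compact operator $\mathrm{Im}(T_0)$, only finitely many of whose singular values can exceed any positive threshold. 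The main obstacle is exactly this min-max step: one must account for the codimension-$k$ loss coming from $\ker B_0$ and maintain uniform lower bounds on $B_z|_{(\ker B_0)^\perp}$ for small $y$, despite the fact that $\|B_z^{-1}\| = \|I + JT_z(H)\|$ itself blows up.
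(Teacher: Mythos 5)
Your proof is correct, but it follows a genuinely different route from the paper's. The paper imports this theorem from \cite{AzSFIES}, and internally re-derives it by a soft argument: the eigenspace satisfies $F\clV_\lambda \subset \bigcap_{O\ni\lambda}\overline{\ran F E_O(H)}$, so an infinite-multiplicity eigenvalue is $(F)$-singular, while Theorem \ref{T: F E(O) subset Upsilon} shows that for a semi-regular $\lambda$ this intersection embeds into the finite-dimensional Lippmann--Schwinger solution space $\Upsilon^1_\lambda(H)$, a contradiction. You instead argue quantitatively with $s$-numbers, and the steps check out: the conjugation identity $\Im T_z(H') = B_z^*\,\Im T_z(H)\,B_z$ with $B_z = I - JT_z(H')$, $B_z^{-1} = I + JT_z(H)$ is an exact consequence of the sandwiched second resolvent identity (from $T_z(H') = T_z(H)B_z$ one verifies $T_z(H') - T_z(H')^* = B_z^*\bigl(T_z(H)-T_z(H)^*\bigr)B_z$); the spectral theorem gives $\Im T_z(H) \ge y^{-1}FPF^*$ with $FPF^*$ compact, positive and of infinite rank because $F(1+|H|)^{-1/2}$ is compact, $|H|$ is scalar on $\ran P$ and $F$ is injective; and your Fredholm/min--max bookkeeping (a $y$-uniform lower bound $c_0$ for $B_z$ on $(\ker B_0)^\perp$ once $B_z \to B_0$ in norm, together with the codimension-$k$ loss and invertibility of $B_z$ for $y>0$) correctly yields $s_m(\Im T_z(H')) \ge c_0^2\mu_{m+k}/y$, which is incompatible, via the approximation property of $s$-numbers, with norm convergence of $\Im T_z(H')$ to a compact limit. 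Your method is close in spirit to the paper's proof that an \emph{isolated} point of the spectrum of infinite multiplicity is $(N)$-singular, but the conjugation by $B_z$ removes the isolation hypothesis, at the cost of using the contradiction hypothesis to control $B_z$. What each approach buys: the paper's route is shorter and gives the formally stronger intermediate fact that such $\lambda$ is $(F)$-singular, whereas yours is self-contained (no $\Upsilon^1_\lambda$ machinery) and exhibits the concrete spectral mechanism --- blow-up of infinitely many singular values of $\Im T_{\lambda+iy}(H')$ for \emph{every} $H'\in\clA$ --- behind the essential singularity.
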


Finally, a remark of terminological character: a real number is often called a point, but since the word ``point'' will also be used to denote an element of the affine space $\clA,$
elements of $\mbR$ will more often be called real numbers. 

\section{Essentially singular numbers}

Semi-regular real numbers~$\lambda$ were studied in depth in \cite{AzSFIES} and in subsequent papers of the series. 
What makes the semi-regular numbers~$\lambda$ interesting is that the notion of \emph{spectral flow} through~$\lambda$ admits a natural extension to such numbers, see \cite{AzSFIES}. 
At the same time, essentially singular numbers were largely ignored. 
But this is not because they are uninteresting, on the contrary --- they are in a sense genuine singularities of a LAP-obeying self-adjoint operator on a rigged Hilbert space.

Roughly speaking, there can be two reasons for a real number to be essentially singular. The first reason is that there can be too much singular spectrum near or at~$\lambda.$
The second reason is that the spectrum of a self-adjoint operator from $\clA$ is erratic near~$\lambda.$ Both scenarios depend on a choice of rigging in the Hilbert space:
the same real number can be essentially singular for one rigging and semi-regular for another. 
According to Theorem \ref{T: infinite mult eigenvalue},
one exception from this is, of course, an eigenvalue of infinite multiplicity.
In this regard, I state my first conjecture.

\begin{conj} Let $H$ be a self-adjoint operator on a Hilbert space $\hilb$ and~$\lambda$ be a real number. If~$\lambda$ is not an eigenvalue of infinite multiplicity for $H,$
then for some  compatible with $H$ choice of rigging $F$ in $\hilb,$ the number~$\lambda$  is semi-regular. 
\end{conj}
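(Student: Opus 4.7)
The plan is to reduce to the case where $\lambda$ is not an eigenvalue of $H$, and then build a rigging $F$ adapted to the spectral decomposition of $H$ so that, for a suitable bounded self-adjoint $J$ on $\clK$, the sandwiched resolvent $T_z(H+F^*JF)$ has a norm limit at $z=\lambda+i0$.

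If $\lambda$ is an eigenvalue of $H$ of finite multiplicity, let $P=P_{\{\lambda\}}(H)$ and decompose $\hilb=P\hilb\oplus P^\perp\hilb$, $H=\lambda\Id_{P\hilb}\oplus H^\perp$. Take the rigging as a direct sum of any injective bounded operator out of the finite-dimensional $P\hilb$ together with a rigging $F^\perp$ on $P^\perp\hilb$, and choose the $P\hilb$-block of $J$ so as to move the eigenvalue $\lambda$ of the finite-dimensional summand away from itself (a generic scalar suffices). Under this reduction it is enough to treat the case $\mu_n(\{\lambda\})=0$ for every $n$ in a spectral-multiplicity decomposition $\hilb\simeq\bigoplus_n L^2(\mbR,d\mu_n)$ on which $H$ acts as multiplication by $x$.

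For the main construction I would take $\clK=\ell^2$ and pick an orthonormal basis $\{e_{n,k}\}$ of $\hilb$ subordinate to the cyclic decomposition and localised near $\lambda$ in the spectral sense (for instance, normalised indicators of a nested sequence of intervals shrinking to $\lambda$, completed by a basis of the complementary subspace), then define $F\psi=\sum_{n,k}c_{n,k}\la e_{n,k},\psi\ra u_{n,k}$, where $(u_{n,k})$ is an orthonormal basis of $\ell^2$ and $c_{n,k}>0$ with $\sum_{n,k} c_{n,k}^2<\infty$. Then $F$ is Hilbert--Schmidt and so compatible with $H$, and the diagonal entries of $T_z(H)$ in $(u_{n,k})$ are normalised local Borel transforms $\mu_n(I_{n,k})^{-1}\int_{I_{n,k}}(x-z)^{-1}\,d\mu_n(x)$ on intervals shrinking to $\lambda$. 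Taking $J=\alpha\Id$ and applying the Aronszajn--Krein rank-one formula block by block, the boundary value at $\lambda$ of the perturbed Borel transform in each cyclic summand exists for all $\alpha$ outside a set of Lebesgue measure zero.

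The crucial step, and the one I expect to be the main obstacle, is producing a \emph{single} choice of $J$ that works simultaneously for every cyclic component and delivers convergence in operator norm rather than merely blockwise. When the total multiplicity is finite one intersects finitely many full-measure sets of admissible scalars to pick a common $\alpha$; when the multiplicity is infinite one must leverage the Hilbert--Schmidt decay $\sum c_{n,k}^2<\infty$ to absorb off-diagonal coupling, with delicate matching of the weights $c_{n,k}$, the local averaged densities of $\mu_n$ at $\lambda$ (which are finite in an averaged sense since $\mu_n(\{\lambda\})=0$), and the size of $J$. This is the real content of the conjecture: once the infinite-multiplicity obstruction has been excised at the level of the eigenspace, no pathology of the singular continuous spectrum should be able to conspire to prevent semi-regularity.
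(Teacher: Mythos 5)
You are attempting to prove what the paper explicitly leaves as an open conjecture (Conjecture 1); the author offers no proof, so your argument has to stand entirely on its own, and it does not. The decisive gap is the one you flag yourself in the last paragraph: semi-regularity of $\lambda$ requires the \emph{norm} limit $T_{\lambda+i0}(H+F^*JF)$ to exist for a single bounded self-adjoint $J$, and nothing in your construction delivers this. Entrywise convergence of the matrix of local Borel transforms in the basis $(u_{n,k})$ is far weaker than norm convergence of the compact-operator-valued family $T_{\lambda+iy}(H+\alpha F^*F)$; the off-diagonal coupling between the localised vectors, and uniformity over the infinitely many blocks, is exactly where the whole difficulty of the conjecture sits, and "delicate matching of the weights" is a restatement of the problem, not an argument.

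Moreover, the step you treat as routine is itself unjustified. Within one cyclic summand $L^2(\mbR,d\mu_n)$ the perturbation $\alpha F^*F$ is diagonal in your basis with infinitely many nonzero weights $\alpha c_{n,k}^2$, so it is not a rank-one perturbation and the Aronszajn--Krein formula does not apply "block by block." And even in a genuinely rank-one situation, the claim that the perturbed Borel transform has a boundary value at the \emph{fixed} number $\lambda$ for a.e.\ coupling $\alpha$ is not a theorem: since $F_\alpha(z)=F(z)/(1+\alpha F(z))$ is a M\"obius transform of $F(z)$, if $F(\lambda+iy)$ fails to converge even in the extended plane $\bar\mbC$ as $y\to0^+$, then $F_\alpha(\lambda+iy)$ fails to converge for \emph{every} $\alpha$; the familiar a.e.\ statements of Aronszajn--Donoghue/Simon--Wolff type are a.e.\ in $\lambda$ for fixed coupling (or use spectral averaging), not a.e.\ in $\alpha$ at a fixed exceptional $\lambda$. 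So the only mechanism that could rescue a fixed singular $\lambda$ is the freedom in choosing the rigging vectors and weights themselves, and you never show that shrinking spectral indicators with summable weights actually produce convergent boundary values at $\lambda$ for an arbitrary measure with $\mu_n(\{\lambda\})=0$ (compare the paper's discussion of $(R)$-singular numbers, where non-convergence in $\bar\mbC$ is precisely the obstruction under scrutiny). The finite-multiplicity eigenvalue reduction at the start is plausible but harmless; the core of the conjecture remains untouched.
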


\begin{defn} A real number~$\lambda$ is $(I)$-singular, if it is an eigenvalue of infinite multiplicity for some $H \in \clA.$
\end{defn}
By Theorem \ref{T: infinite mult eigenvalue}, an $(I)$-singular number is essentially singular. 

\subsection{$(L)$-singular numbers}
Let~$\lambda$ be a real number and $H$ be a point of the affine space $\clA.$
We define a number $L(\lambda; H) \in [0, +\infty]$ as the supremum of numbers $R\geq 0$ such that 
for any $N \in \mb N$ and any $\eps>0$ there exists $y \in (0,\eps)$ such that at least $N$ eigenvalues 
of 
$
  T_{\lambda+iy}(H)
$
have absolute value $\geq R.$

The following theorem is probably intuitively obvious, but nevertheless I provide its proof. 
\begin{thm}
For any semi-regular point~$\lambda$ and any $H \in \clA$ we have $L(\lambda; H) = 0.$
\end{thm}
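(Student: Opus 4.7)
The plan is to use Theorem \ref{T2} to reduce the problem to a norm-convergent sandwiched resolvent, and then apply Riesz--Schauder spectral continuity to control large eigenvalues through a M\"obius transformation argument.

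\textbf{Step 1 (reduction to a regular point).} By Theorem \ref{T2}, I can fix $r \in \mbR$ outside a discrete set so that $H' := H + r F^*F$ is $\lambda$-regular, and hence $T_0' := T_{\lambda+i0}(H')$ exists as a norm limit. The second resolvent identity applied to $H = H' - r F^*F$ yields
\begin{equation*}
  (1 - r\, T_z(H'))\, T_z(H) = T_z(H'), \qquad z = \lambda + iy,\ y > 0.
\end{equation*}
Thus $T_z(H)$ is a rational function of $T_z(H')$: the two operators commute, share generalized eigenspaces, and their non-zero eigenvalues are in M\"obius correspondence $\nu \leftrightarrow \mu := \nu/(1+r\nu)$ with matching algebraic multiplicities.

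\textbf{Step 2 (counting large eigenvalues).} Fix $R > 0$ and let $E_R$ be the image of $\{|\nu| \geq R\}$ under the M\"obius map $\nu \mapsto \nu/(1+r\nu)$; this is a closed disk containing $1/r$ but not $0$. Since $T_0'$ is compact and $0$ is the only possible accumulation point of its spectrum, the total algebraic multiplicity of $\spec(T_0') \cap E_R$ is finite. Slightly enlarging $E_R$ to $\tilde E_R$ so that $\partial \tilde E_R$ avoids $\spec(T_0')$, the norm convergence $T_{\lambda+iy}(H') \to T_0'$ together with continuity of the Riesz projection around $\partial \tilde E_R$ produces a uniform finite bound $M$ on the total algebraic multiplicity of $\spec(T_{\lambda+iy}(H')) \cap E_R$ for all sufficiently small $y > 0$. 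By Step 1, the same bound $M$ controls the count, with algebraic multiplicity, of eigenvalues of $T_{\lambda+iy}(H)$ with $|\nu| \geq R$. Taking $N := M + 1$ and $\eps$ small enough shows the condition defining $L(\lambda; H) \geq R$ fails, and since $R > 0$ is arbitrary, $L(\lambda; H) = 0$.

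\textbf{Main obstacle.} The argument has to accommodate the case when $H$ itself is $\lambda$-singular: in that case one necessarily has $1/r \in \spec(T_0')$, since otherwise the identity of Step 1 would extend $T_z(H)$ by norm continuity, contradicting singularity. This is precisely the scenario in which a few eigenvalues of $T_{\lambda+iy}(H)$ diverge as $y \to 0^+$. The proof accommodates it uniformly because $1/r \ne 0$ forces $1/r$ to be an \emph{isolated} eigenvalue of the compact operator $T_0'$ of \emph{finite} algebraic multiplicity, which is exactly the input needed to close the bound in Step 2.
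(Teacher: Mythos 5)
Your proposal is correct and takes essentially the same route as the paper's own proof: Theorem \ref{T2} supplies a $\lambda$-regular operator $H+rF^*F$ on the line through $H$, the nonzero eigenvalues of $T_z(H)$ and $T_z(H+rF^*F)$ are in M\"obius correspondence (the paper states this via resonance points, the eigenvalues of $T_z(H_s)$ being $(s-r^j_z)^{-1}$, and normalizes to $s=1$), and compactness of the norm limit $T_{\lambda+i0}(H+rF^*F)$ yields a uniform bound on the number of eigenvalues outside any disk bounded away from $0$ for small $y>0$. The differences are only in scaffolding: you derive the correspondence from the second resolvent identity and holomorphic functional calculus and justify the uniform count with Riesz projections, while the paper quotes the resonance-point formula and norm continuity on $y\in[0,1]$; note also that your set $E_R$ is a Euclidean closed disk only when $|r|>1/R$ (otherwise it is the complement of a disk or a half-plane), but all your argument actually uses is that $E_R$ stays at distance at least $R/(1+|r|R)$ from $0$, which is precisely the paper's inequality $|r^j|^{-1}>R \Rightarrow |1-r^j|^{-1}>R/(1+R)$ in disguise.
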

\begin{proof}
We write $H_0$ for $H.$
For $H_0$-regular~$\lambda$ the limit $T_{\lambda+i0}(H_0)$ exists and is compact.  Hence, in this case 
 $L(\lambda; H_0) = 0.$
 
Let  now~$\lambda$ be $H_0$-singular and semi-regular.
Recall that eigenvalues of $T_{\lambda+iy}(H_s)$ are $(s-r^j_{\lambda+iy})^{-1},$ where $r^j_z$ are the resonance points 
of the pair $(H_0,F^*F)$ and $$H_s = H_0+sF^*F.$$
The eigenvalues of $T_{\lambda+iy}(H_0)$ are thus $(-r^j_{\lambda+iy})^{-1},$ and we need to show that as $y\to 0^+$ only finitely many of those 
can get out of $R\mbD.$

Let $R>0.$
By Theorem \ref{T2} the limit $T_{\lambda+i0}(H_s)$ exists for all $s$ except a discrete set. 
We can assume that for  $s = 1$ the limit $T_{\lambda+i0}(H_s)$ exists. 
The eigenvalues of  $T_{\lambda+i0}(H_1)$ are
$(1 - r^j_{\lambda+i0})^{-1}.$ Since  $T_{\lambda+iy}(H_1)$ is compact and norm continuously depends on $y \in [0,1],$
 we have 
$$
    n :=  \sup _{y \in [0,1]}  \#  \left\{ j \colon   |1-r^j_{\lambda+iy}|^{-1}   >  \frac{R}{1+R} \right\} < \infty.
$$
In other words, for any $y\in [0,1]$ the operator $T_{\lambda+iy}(H_0)$ cannot have more than $n$ eigenvalues outside the disk of radius $R/(1+R).$
It is easy to see that the inequality $|r^j_{\lambda+iy}|^{-1} > R$ implies the inequality 

$$
     |1 - r^j_{\lambda+iy}|^{-1}   >  \frac{R}{1+R}.
 $$
Thus, for any $y \in [0,1]$ the number of eigenvalues of $T_{\lambda+iy}(H_0)$ outside the open disk of radius $R$ is no more than $n.$
 Hence, 
 $L(\lambda; H_0) \leq R$ and since $R>0$ is arbitrary, we have $L(\lambda; H_0) = 0.$
\end{proof}

\begin{defn} A real number~$\lambda$ is $(L)$-singular, if for some $H \in \clA$ we have $L(\lambda, H) > 0.$ 
We also say that~$\lambda$ is a singular number of type $(L).$
\end{defn}
Thus, the following theorem holds. 
\begin{thm}
    Any $(L)$-singular number is essentially singular.    \quad $\Box$
\end{thm}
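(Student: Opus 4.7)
The plan is to observe that this statement is nothing more than the contrapositive of the theorem immediately preceding it, so no new ideas are required. Concretely, suppose for contradiction that some $(L)$-singular real number $\lambda$ fails to be essentially singular. By definition of essential singularity this means $\lambda \in \Lambda(\clA),$ i.e.~$\lambda$ is semi-regular. Then the preceding theorem applies and yields $L(\lambda; H) = 0$ for every $H \in \clA.$ But $(L)$-singularity of $\lambda$ asserts the existence of at least one $H \in \clA$ with $L(\lambda; H) > 0,$ a contradiction.

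Equivalently, and perhaps more cleanly for the write-up, I would simply phrase the argument as follows: if $\lambda$ were semi-regular then by the previous theorem we would have $L(\lambda; H) = 0$ for \emph{all} $H \in \clA,$ so the defining condition of an $(L)$-singular number (existence of some $H \in \clA$ with $L(\lambda; H) > 0$) forces $\lambda$ to lie outside $\Lambda(\clA),$ which is the definition of essential singularity.

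There is no real obstacle here; the work has already been done in establishing the preceding theorem, and the only substantive content is recognising that $(L)$-singularity is, by design, the negation of the conclusion of that theorem quantified existentially over $H \in \clA.$ Accordingly the proof in the paper should be a single line invoking the previous theorem, and I would not expand it further.
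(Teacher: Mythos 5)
Your argument is correct and is exactly what the paper intends: the statement is the contrapositive of the preceding theorem (that $L(\lambda;H)=0$ for every $H\in\clA$ at a semi-regular $\lambda$), which is why the paper records it with only a ``$\Box$'' and no further proof.
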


\begin{conj} If~$L(\lambda,H) > 0$ for one $H \in \clA$ then $L(\lambda,H') > 0$ for any other $H' \in \clA.$
\end{conj}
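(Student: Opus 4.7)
The natural strategy is to connect $T_z(H')$ and $T_z(H)$ via the second resolvent identity. Writing $H' = H + F^*\Delta F$ with $\Delta = J' - J$ bounded self-adjoint on $\clK$, one has
$$T_z(H') \;=\; (1 + T_z(H)\Delta)^{-1} T_z(H) \;=\; T_z(H)(1 + \Delta T_z(H))^{-1}.$$
Given $L(\lambda;H) \geq R > 0$, the aim is to produce a constant $R' > 0$, depending on $R$ and $\|\Delta\|$ only, such that $L(\lambda;H') \geq R'$.

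The warm-up case $\Delta = cI_{\clK}$, i.e., $H' = H + cF^*F$, is immediate from the resonance-point formulas used in the proof of the preceding theorem: with $r^j_z$ the resonance points of $(H,F^*F)$, the eigenvalues of $T_z(H+cF^*F)$ are $(c-r^j_z)^{-1}$, while those of $T_z(H)$ are $-1/r^j_z$. The hypothesis $L(\lambda;H) \geq R$ therefore gives, for arbitrarily small $y$, at least $N$ of the numbers $r^j_{\lambda+iy}$ satisfying $|r^j| \leq 1/R$; by the triangle inequality $|c-r^j|^{-1} \geq (|c|+1/R)^{-1} = R/(R|c|+1)$, and hence $L(\lambda;H+cF^*F) \geq R/(R|c|+1) > 0$. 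This settles the conjecture along every affine line through $H$ in the direction $F^*F$.

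For a general direction $F^*\Delta F$ the eigenvalues of $T_z(H_s)$, $H_s = H + sF^*\Delta F$, no longer admit such a clean rational expression in the eigenvalues of $T_z(H)$ because $T_z(H)$ and $\Delta$ need not commute. What does generalize is the formula for the \emph{product}: a brief calculation shows that the eigenvalues of $T_z(H_s)\Delta$ equal $(s-r^j_z)^{-1}$, where $r^j_z$ is now characterized by $-1/r \in \spec(T_z(H)\Delta)$. This suggests the two-step route of first transferring $L$-singularity at $T_z(H)$ into analogous growth of the eigenvalues of $T_z(H)\Delta$, then applying the scalar estimate above to pass from the spectrum of $T_z(H_s)\Delta$ back to that of $T_z(H_s)$ itself. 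The principal obstacle lies in this transfer: when $\Delta$ has nontrivial kernel or very small singular values, large eigenvalues of $T_z(H)$ may be annihilated under multiplication by $\Delta$, and conversely large eigenvalues of $T_z(H_s)\Delta$ need not lift back to large eigenvalues of $T_z(H_s)$. A conceptually cleaner route would be to recast $L$-singularity as a property intrinsic to the affine space $\clA$, for instance as a failure of a normal-family condition for $\{T_{\lambda+iy}(H)\}_{H\in\clA,\,y>0}$ near $\lambda+i0$, which would make invariance under the choice of base point automatic; finding such an intrinsic reformulation looks to be the crux.
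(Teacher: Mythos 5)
The statement you are trying to prove is posed in the paper as a \emph{conjecture}: the author gives no proof, so there is nothing to compare your argument against, and a complete proof would be new content. Your proposal does not supply one. What you actually establish is the special case $H' = H + cF^*F$, $c \in \mbR$: the computation with the resonance points $r^j_z$ of the pair $(H, F^*F)$, giving eigenvalues $-1/r^j_z$ for $T_z(H)$ and $(c - r^j_z)^{-1}$ for $T_z(H + cF^*F)$, together with the bound $|c - r^j_z|^{-1} \geq R/(R|c|+1)$ when $|r^j_z| \leq 1/R$, is correct (it is essentially the paper's own eigenvalue bookkeeping from the theorem that $L(\lambda;H)=0$ at semi-regular points, run in the reverse direction), and it yields an explicit lower bound $L(\lambda; H + cF^*F) \geq L(\lambda;H)/(|c|\,L(\lambda;H)+1)$. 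That is a worthwhile partial result, but the conjecture quantifies over all $H' \in \clA$, i.e.\ over arbitrary bounded self-adjoint directions $F^*\Delta F$, and for those your argument stops exactly where the difficulty begins.

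The gap you identify is genuine and is the whole content of the conjecture: to run your two-step route you must transfer the hypothesis ``many eigenvalues of $T_z(H)$ of modulus $\geq R$'' into a statement about $\spec\brs{T_z(H)\Delta}$, and eigenvalues of non-normal compact operators are not stable under one-sided multiplication. Even for invertible $\Delta$ there is no analogue of the $s$-number inequality $s_n(T\Delta) \geq s_n(T)/\|\Delta^{-1}\|$ at the level of eigenvalues (a highly non-normal $T$ can have large eigenvalues while $T\Delta$ is close to quasinilpotent), and for non-invertible $\Delta$ the transfer fails outright; the reverse passage from $\spec\brs{T_z(H_s)\Delta}$ back to $\spec\brs{T_z(H_s)}$ has the same defect. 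Note also that the sufficiency of the problematic step is doubtful for another reason: large eigenvalues of $T_z(H)\Delta$ correspond to resonance points of the pair $(H, F^*\Delta F)$ near $0$, which is information about the line $H + \mbR F^*\Delta F$, whereas the conclusion you need concerns $T_z(H')$ itself, sandwiched by $F$ on both sides with no $\Delta$. Your closing suggestion, to find a base-point-free reformulation of $L$-singularity (e.g.\ via a normal-family or compactness-type condition on $\{T_{\lambda+iy}(H)\}$), is a reasonable direction, but as written it is only a plan; in its current form the proposal should be presented as (i) a proof of invariance of $L(\lambda;\cdot)>0$ along the lines $H + \mbR F^*F$, and (ii) a clear statement of the remaining open transfer problem, not as a proof of the conjecture.
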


\begin{conj} $(I)$-singular number is $(L)$-singular. 
\end{conj}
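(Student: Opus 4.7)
The plan is to show directly that $L(\lambda;H) = \infty$ whenever $H \in \clA$ has $\lambda$ as an eigenvalue of infinite multiplicity, which immediately implies that $\lambda$ is $(L)$-singular. Let $P$ denote the spectral projection of $H$ onto the $\lambda$-eigenspace. Since $F$ has zero kernel and $H$ acts as $\lambda I$ on $\mathrm{Ran}(P)$, the operator $FP\colon \hilb\to\clK$ is compact, and $FPF^* = (FP)(FP)^*$ is a positive compact operator on $\clK$ of infinite rank, with positive eigenvalues $\mu_1 \geq \mu_2 \geq \cdots > 0$. Because $P$ commutes with the resolvent of $H$, one has the decomposition
$$T_{\lambda+iy}(H) \;=\; \frac{i}{y}\, FPF^* \,+\, S_y, \qquad S_y \,:=\, F\bigl(H|_{(I-P)\hilb} - \lambda - iy\bigr)^{-1}(I-P)F^*,$$
and heuristically the first summand should contribute $N$ eigenvalues of $T_{\lambda+iy}(H)$ of size of order $\mu_n/y$ as $y\to 0^+$, for every $N$.

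To make this rigorous I would use the resonance point formalism already employed in the proof of the $L=0$ theorem for semi-regular points: the non-zero eigenvalues of $T_{\lambda+iy}(H)$ are exactly $-1/r^j_{\lambda+iy}$, where $r^j_z$ is a resonance point of $(H,F^*F)$ iff $z$ is an eigenvalue of the non-self-adjoint operator $H+rF^*F$. The key lemma I would establish is: \emph{for each $n$ with $\mu_n>0$ and each sufficiently small $y>0$ there is a resonance point $r^n_{\lambda+iy}$, counted with algebraic multiplicity, of the form $r^n_{\lambda+iy} = iy/\mu_n + o(y)$}. Granted the lemma, $\eta^n_y := -1/r^n_{\lambda+iy}$ is an eigenvalue of $T_{\lambda+iy}(H)$ of modulus $(\mu_n/y)(1+o(1))$, and the numbers $iy/\mu_1,\ldots,iy/\mu_N$ being pairwise distinct produces $N$ distinct eigenvalues. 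For any prescribed $R>0$, $N\in\mathbb{N}$ and $\eps>0$, taking $y<\min(\eps,\mu_N/(2R))$ gives at least $N$ eigenvalues of $T_{\lambda+iy}(H)$ of modulus $\geq R$, so $L(\lambda;H)\geq R$; since $R$ is arbitrary, $L(\lambda;H) = \infty$.

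To construct $r^n_{\lambda+iy}$ I would look for $\phi$ close to the $n$-th eigenvector $e_n$ of $PF^*FP|_{\mathrm{Ran}(P)}$ (eigenvalue $\mu_n$) solving $(H+rF^*F-\lambda-iy)\phi = 0$. Splitting $\phi = P\phi + (I-P)\phi$, the $(I-P)$-block inverts to $(I-P)\phi = -r\widetilde H(r)^{-1}(I-P)F^*F(P\phi)$ with $\widetilde H(r) := H|_{(I-P)\hilb} - \lambda - iy + r(I-P)F^*F(I-P)$, and substituting into the $P$-block produces the Schur-complement equation
$$\bigl[\,rD - iyI - r^2 M(r)\,\bigr] P\phi \;=\; 0 \quad \text{on }\mathrm{Ran}(P),$$
where $D = PF^*FP|_{\mathrm{Ran}(P)}$ has eigenvalues $\mu_n$ and $M(r) := PF^*F\widetilde H(r)^{-1}(I-P)F^*FP$. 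Formally $rD - iyI = 0$ at $r = iy/\mu_n$, which is the desired leading-order value.

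The main obstacle is that the naive bound $\|\widetilde H(r)^{-1}\| \leq 1/y$ makes $r^2 M(r)$ of size $O(1)$ along the critical curve $r = iy/\mu_n$, i.e.\ of the same order as the leading term $iy I$, so the implicit function theorem cannot be applied directly to the Schur equation. Overcoming this would require a finer analysis, either by (a) estimating the individual matrix elements $\langle e_k, M(r) e_n\rangle$ for $k,n$ in any fixed finite range, which to leading order in $r$ equal $\langle F^*Fe_k, (H|_{(I-P)\hilb}-\lambda-iy)^{-1}(I-P)F^*Fe_n\rangle$, a scalar Nevanlinna function of $z=\lambda+iy$ that may remain bounded even while $\|S_y\|$ diverges, and then running a finite-dimensional Rouch\'e counting argument on the $N\times N$ truncation of the Schur equation; or (b) applying analytic perturbation theory to the non-self-adjoint family $s\mapsto H+sF^*F$ in a complex neighbourhood of $s=0$ and tracking the eigenvalues branching from the infinitely degenerate eigenvalue $\lambda$ of $H$. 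Both approaches must cope with the infinite multiplicity of $\lambda$, which puts the problem outside the scope of Kato's perturbation theory for isolated eigenvalues of finite multiplicity.
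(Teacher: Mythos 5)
The statement you are trying to prove is not proved in the paper at all: it is stated as a conjecture, and relation (c) ``is every $(I)$-singular number $(L)$-singular?'' is explicitly listed among the open problems; the closest the paper comes is the theorem that an \emph{isolated} point of the spectrum of infinite multiplicity is $(N)$-singular, proved via $s$-numbers and Fan's inequality. Your submission is accordingly a plan rather than a proof, and the gap is exactly where you say it is: the key lemma, existence of resonance points $r^n_{\lambda+iy}=iy/\mu_n+o(y)$ for every $n$, is never established. On the critical scale $r\sim iy/\mu_n$ the Schur-complement error $r^2M(r)$ is \emph{a priori} of the same order as the leading term $iy$, because the only available bound is $\|\widetilde H(r)^{-1}\|\lesssim 1/y$; neither of your two proposed repairs closes this. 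For (a), the matrix elements $\langle F^*Fe_k,(H|_{(I-P)\hilb}-\lambda-iy)^{-1}F^*Fe_n\rangle$ are boundary values of Nevanlinna functions which exist for a.e.\ real point but need not exist, or stay bounded, at the \emph{given} $\lambda$ --- and since $\lambda$ is expected to be essentially singular, divergence there is precisely what one should anticipate; if these quantities were uniformly controlled, essentially the paper's own $s$-number argument would already settle the embedded case, which the author describes as work in progress. For (b), analytic perturbation theory for $s\mapsto H+sF^*F$ does not apply to an eigenvalue of infinite multiplicity embedded in $\sigma_{ess}$, as you yourself note. So the central difficulty of the conjecture is identified but not overcome, and the argument as written does not yield $L(\lambda;H)>0$.

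One constructive remark: in the special case where $\lambda$ is an \emph{isolated} spectral point of infinite multiplicity (the case actually treated in the paper for $(N)$), your decomposition $T_{\lambda+iy}(H)=iy^{-1}FPF^*+T_{\lambda+iy}(G)$ with $C:=\sup_{y\in(0,1]}\|T_{\lambda+iy}(G)\|<\infty$ can be pushed through for genuine eigenvalues without any Schur complement: the unperturbed part is normal with eigenvalues $i\mu_n/y$, so $\spec(iy^{-1}FPF^*+tT_{\lambda+iy}(G))$ stays in the $C$-neighbourhood of $\{0\}\cup\{i\mu_n/y\}$ for all $t\in[0,1]$; choosing a circle $|z|=R'$ with $R'=(\mu_{N'}+\mu_{N'+1})/2y$ for an index $N'\ge N$ with $\mu_{N'}>\mu_{N'+1}$, the circle lies in the common resolvent set for small $y$, the Riesz projection along it is norm-continuous in $t$, and hence $T_{\lambda+iy}(H)$ has at least $N'$ eigenvalues (with algebraic multiplicity) of modulus $>R'\to\infty$. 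That gives $L(\lambda;H)=\infty$ for isolated $\lambda$, but it relies irreducibly on the uniform bound on $\|T_{\lambda+iy}(G)\|$, which fails in the embedded situation --- and the embedded situation is the actual content of the conjecture.
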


Note that $(L)$-singular numbers exist, --- a trivial example is provided by a scalar operator $H = \lambda \Id$ and a compact rigging $F.$

\subsection{$(N)$-singular numbers}
Recall that $s$-numbers, $s_1(T), s_2(T), \ldots,$ of a compact operator $T$ on a Hilbert space are eigenvalues of  $|T|$ written in the order 
of decreasing magnitudes. The following approximating property holds, see \cite{GoKr}: for any $n=0,1,2,\ldots$
\begin{equation} \label{F: approx property of s-numbers}
    s_{n+1}(T) = \min_{K \in \mathfrak K_n} \| T - K \|,
\end{equation}
where $\mathfrak K_n$ is the set of all operators of rank $\leq n.$
For two compact operators $A$ and~$B$ the following inequality is a special case of Fan's inequality, see \cite{GoKr},
\begin{equation} \label{F: special case of Fans inequality}
    s_{n+1}(A+B) \leq s_n(A) + \| B \|.
\end{equation}

Let~$\lambda$ be a real number and $H$ be a point of the affine space $\clA.$
We define $N(\lambda; H) \in [0, +\infty]$ as the supremum of numbers $R\geq 0$ such that 
for any $n \in \mb N$ and any $\eps>0$ there exists $y \in (0,\eps)$ such that at least $n$ $s$-numbers 
of 
$
  T_{\lambda+iy}(H)
$
have absolute value $\geq R.$

\begin{thm} \label{T: for semi-regular N(lambda) = 0} 
For any semi-regular point~$\lambda$ and any $H \in \clA$ we have $N(\lambda; H) = 0.$
\end{thm}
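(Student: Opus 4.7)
The plan is to split according to whether $\lambda$ is $H$-regular, handling the harder $H$-singular case via the resolvent identity $T_z(H) = T_z(H_s)(I - sT_z(H_s))^{-1}$ together with a Riesz projection that isolates the finite-dimensional subspace where the inverse blows up.

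If $\lambda$ is $H$-regular, I would argue directly: $T^+ := T_{\lambda+i0}(H)$ exists and is compact, so given $R>0$, the approximation property \eqref{F: approx property of s-numbers} gives some $n$ and a rank-$(n-1)$ operator $K$ with $\|T^+ - K\| < R/2$; choosing $\eps>0$ small enough that $\|T_{\lambda+iy}(H) - T^+\| < R/2$ for $y \in (0,\eps)$, the triangle inequality yields $\|T_{\lambda+iy}(H) - K\| < R$, so $s_n(T_{\lambda+iy}(H)) < R$ uniformly, proving $N(\lambda; H) \leq R$ and hence $N(\lambda; H) = 0$.

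Now assume $\lambda$ is $H$-singular. By Theorem \ref{T2}, pick $s \neq 0$ close to $0$ such that $T^+_s := T_{\lambda+i0}(H_s)$ exists, where $H_s := H + sF^*F$; this operator is compact. For $y>0$, the resolvent identity gives $T_z(H) = T_z(H_s)(I - sT_z(H_s))^{-1}$. If $I - sT^+_s$ were invertible, $T_z(H)$ would have a norm limit, contradicting $H$-singularity; hence $1/s \in \spec(T^+_s)$ as an eigenvalue of finite algebraic multiplicity $m \geq 1$. I would then take a small positively oriented contour $\gamma$ encircling $1/s$ and no other point of $\spec(T^+_s)$, and form the Riesz projections
\[
P_\infty := \frac{1}{2\pi i} \oint_\gamma (w - T^+_s)^{-1}\,dw, \qquad P_y := \frac{1}{2\pi i} \oint_\gamma (w - T_z(H_s))^{-1}\,dw.
\]
For $y$ sufficiently small, $P_y$ has rank $m$ and $P_y \to P_\infty$ in norm; moreover $P_y$ commutes with $T_z(H_s)$ and with $(I - sT_z(H_s))^{-1}$. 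Writing $T_z(H) = T_z(H)P_y + T_z(H)(I - P_y)$, the first summand has rank $\leq m$, and the second, equal to $(I - sT_z(H_s))^{-1}T_z(H_s)(I - P_y)$, should converge in norm to a compact operator $A_\infty$ as $y \to 0^+$, because $(I - sT_z(H_s))^{-1}$ restricted to $(I - P_y)\clK$ is uniformly bounded (its spectrum there stays bounded away from $0$) and norm-convergent, while $T_z(H_s)(I - P_y) \to T^+_s(I - P_\infty)$ in norm.

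To finish, fix $R>0$. Applying the approximation property \eqref{F: approx property of s-numbers} to $A_\infty$, there exist $n$ and a rank-$(n-1)$ operator $K_0$ with $\|A_\infty - K_0\| < R/2$; then $K_0 + T_z(H)P_y$ has rank $\leq n + m - 1$, and
\[
\|T_z(H) - K_0 - T_z(H)P_y\| = \|T_z(H)(I - P_y) - K_0\| < R
\]
for $y$ small, giving $s_{n+m}(T_z(H)) < R$ uniformly on a punctured neighbourhood of $0$. Hence $N(\lambda; H) \leq R$, and by arbitrariness of $R$, $N(\lambda; H) = 0$. The main obstacle I anticipate is establishing the uniform norm boundedness and norm convergence of the restricted inverse $(I - sT_z(H_s))^{-1}|_{(I - P_y)\clK}$: since $T^+_s$ need not be self-adjoint, $P_\infty$ is a non-orthogonal Riesz projection, so the argument cannot rely on functional calculus of self-adjoint operators, but must rest on norm continuity of Riesz projections and of the isolated portion of the spectrum of a compact operator under norm perturbation.
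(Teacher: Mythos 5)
Your argument is correct, and it reaches the conclusion by a genuinely different route than the paper. The paper works inside its coupling-resonance machinery: writing $H_s = H + sF^*F$, it uses the fact that $s \mapsto T_z(H_s)$ is meromorphic with poles at the resonance points $r^j_z$, that at a semi-regular $\lambda$ only finitely many of these converge to $0$, and it splits $T_z(H_0)$ into a norm-convergent holomorphic part plus a finite-rank pole part associated with those resonances, then applies \eqref{F: approx property of s-numbers} and \eqref{F: special case of Fans inequality}. Your proof achieves the same ``finite-rank bad part plus norm-convergent part'' splitting without invoking the resonance apparatus: you fix one $s$ with $T_{\lambda+i0}(H_s)$ existing (Theorem \ref{T2}), use the second resolvent identity $T_z(H) = T_z(H_s)(I - sT_z(H_s))^{-1}$, and isolate the eigenvalue $1/s$ of the compact limit by Riesz projections; note that the eigenvalues of $T_z(H_s)$ are $(s - r^j_z)^{-1}$, so the spectral subspace for eigenvalues near $1/s$ is exactly the subspace attached to resonances near $0$ --- the two decompositions are conceptually the same object, but yours is self-contained, resting only on compact-operator perturbation theory. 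The step you flag as the main obstacle does close along the lines you indicate: since $P_y \to P_\infty$ in norm and $T_z(H_s) \to T^+_s$ in norm, the operator $C_y := (I - sT_z(H_s))(I-P_y) + P_y$ converges in norm to $C_0 := (I - sT^+_s)(I-P_\infty) + P_\infty$, which is invertible because $1/s$ lies outside the spectrum of $T^+_s$ restricted to $\ran(I-P_\infty)$; continuity of inversion then gives the uniform bound and norm convergence of $(I - sT_z(H_s))^{-1}(I-P_y) = (I-P_y)C_y^{-1}$, with no appeal to self-adjointness. (It is also worth recording the identity $(I - sT_z(H_s))^{-1} = I + sT_z(H)$ for $y>0$, which makes the invertibility for $y>0$ automatic.) What the paper's route buys is brevity for a reader already equipped with \cite{AzSFIES}; what yours buys is independence from that machinery and an explicit, checkable control of the finite rank $m$.
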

\begin{proof} Let $H_0 = H$ and $H_s = H_0 + sF^*F.$ Since~$\lambda$ is semi-regular, by Theorem \ref{T2} the limit
$T_{\lambda + i0}(H_s)$ exists for all $s$ except a discrete set. Let $r_z^j$ be those coupling resonances of the pair $H_0,  F^*F,$
which converge to zero as $z = \lambda + iy$ approaches $\lambda + i0.$ Since $\lambda $ is semi-regular, there are only finitely many such resonance points.
Recall that $r_z^j$ are poles of the meromorphic function $s \mapsto T_z(H_s),$ see \cite{AzSFIES}. 
We have 
$$
    T_z(H_s) = \tilde T_z(H_s) + \Pi(z, s),
$$
where  $\tilde T_z(H_s)$ is holomorphic at $r_z^j$ and $\Pi(z, s)$ is the pole part corresponding to the coupling resonances $r_z^j.$
The pole part $\Pi(z,s)$ does not (necessarily) converge as $z \to \lambda  + i0,$ but its rank is $\leq N,$ --- the number of the coupling resonances $r_z^j$ including multiplicities,
while $\tilde T_z(H_s)$ has the norm limit $\tilde T_{\lambda + i0}(H_s)$ when $s = 0,$ see~\cite{AzSFIES}. 
Since $\tilde T_{\lambda + iy}(H_0),$ $y \in [0,1],$ are compact, 
from this, using the approximating property \eqref{F: approx property of s-numbers} of $s$-numbers and \eqref{F: special case of Fans inequality}, one can easily infer that~$N(\lambda,H_0) = 0.$ 
\end{proof}

\begin{defn} A real number~$\lambda$ is $(N)$-singular, if for some $H \in \clA$ we have $N(\lambda, H) > 0.$ 
We also say that~$\lambda$ is a singular number of type $(N).$
\end{defn}

Similar to $(L)$-singular numbers, from Theorem \ref{T: for semi-regular N(lambda) = 0} we immediately have 
\begin{thm}
    Any $(N)$-singular number is essentially singular.    \quad $\Box$
\end{thm}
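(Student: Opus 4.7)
The plan is to observe that this theorem is the immediate contrapositive of Theorem \ref{T: for semi-regular N(lambda) = 0}, so the ``proof'' is really just a one-line logical rearrangement and the $\Box$ in the statement is warranted. I would structure it as follows.

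First I would recall the definitions: a real number $\lambda$ is essentially singular precisely when it fails to be semi-regular, i.e., $\lambda \notin \Lambda(H)$ for every $H \in \clA$. On the other hand, $\lambda$ is $(N)$-singular when there exists at least one $H \in \clA$ with $N(\lambda, H) > 0$. Fix such an $H$.

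Next I would invoke Theorem \ref{T: for semi-regular N(lambda) = 0} in contrapositive form: since $N(\lambda, H) > 0$, the number $\lambda$ cannot be semi-regular, because semi-regularity would force $N(\lambda, H') = 0$ for every $H' \in \clA$, including the chosen $H$. Therefore $\lambda$ is essentially singular by definition.

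There is no real obstacle here — the substance of the argument lies entirely in the previous theorem, which already handles the passage from semi-regularity of $\lambda$ (witnessed at some point of $\clA$) to vanishing of $N(\lambda, H)$ at an arbitrary point $H$. The only thing to be careful about is the quantifier structure: semi-regularity is a property of $\clA$ (one needs \emph{some} operator from $\clA$ to be $\lambda$-regular), while $(N)$-singularity also only requires \emph{some} operator from $\clA$ with $N(\lambda, H) > 0$; Theorem \ref{T: for semi-regular N(lambda) = 0}'s conclusion ``for any $H \in \clA$'' is exactly what bridges these two existential statements. Hence the argument is complete, and the symbol $\Box$ in the statement is appropriate.
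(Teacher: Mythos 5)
Your proposal is correct and matches the paper exactly: the paper states this theorem as an immediate consequence (hence the $\Box$) of Theorem \ref{T: for semi-regular N(lambda) = 0}, which is precisely the contrapositive argument you give. Your remark on the quantifier structure is accurate and is the only point of substance in the deduction.
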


$(N)$-singular numbers exist, --- a trivial example is provided by a scalar operator $H = \lambda \Id$ and a compact rigging $F.$
A little more sophisticated examples of the same nature are provided by the following theorem.

\begin{thm}  If a real number $\lambda$ is an isolated point of spectrum of infinite multiplicity for some $H \in \clA,$ then it is $(N)$-singular. 
\end{thm}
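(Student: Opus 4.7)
The plan is to exploit the isolated infinite-multiplicity eigenvalue at $\lambda$ to extract from $T_{\lambda+iy}(H)$ a rank-$\infty$ singular piece of the form $\tfrac{i}{y}FPF^*$, and then invoke Fan's inequality to show that arbitrarily many of its $s$-numbers blow up as $y\to 0^+$. I would first set $P := E_{\{\lambda\}}(H)$, so $\dim P\hilb = \infty$ by hypothesis; since $\lambda$ is isolated in $\spec(H)$, the functional calculus splits
\begin{equation*}
(H-\lambda-iy)^{-1} = \tfrac{i}{y}\,P + (H-\lambda-iy)^{-1}(1-P),
\end{equation*}
and the second summand has norm uniformly bounded for $y\in[0,1]$ because $H(1-P)$ has spectrum bounded away from $\lambda$. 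Sandwiching with $F$ gives
\begin{equation*}
T_{\lambda+iy}(H) = \tfrac{i}{y}\,FPF^* + R(y),\qquad M := \sup_{y\in[0,1]}\|R(y)\| < \infty.
\end{equation*}

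The crucial check is that $K := FPF^*$ is a compact positive operator of infinite rank. Writing $FP = F(1+|H|)^{-1/2}\cdot (1+|H|)^{1/2}P$, the first factor is compact by the rigging compatibility and the second is bounded because $|H|=|\lambda|$ on $P\hilb$; hence $FP$ is compact and everywhere defined on $\hilb$, and $K=(FP)(FP)^*$ is a compact positive operator with $s_n(K)=s_n(FP)^2$. Because $F$ is injective, $FP$ remains injective on the infinite-dimensional subspace $P\hilb$, so its range is infinite-dimensional and therefore $s_n(K)>0$ for every $n\in\mb N$.

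Setting $c_n := s_n(K)>0$ and applying Fan's inequality \eqref{F: special case of Fans inequality} with $A=T_{\lambda+iy}(H)$ and $B=-R(y)$ (so that $A+B=\tfrac{i}{y}K$) yields
\begin{equation*}
s_n(T_{\lambda+iy}(H)) \;\geq\; s_{n+1}\!\left(\tfrac{i}{y}K\right) - \|R(y)\| \;=\; \frac{c_{n+1}}{y} - \|R(y)\|.
\end{equation*}
For any fixed $n$ and any $R>0$, the right-hand side exceeds $R$ as soon as $y < c_{n+1}/(R+M)$, which shows that arbitrarily close to $0$ at least $n$ $s$-numbers of $T_{\lambda+iy}(H)$ exceed $R$; hence $N(\lambda;H)=+\infty$ and $\lambda$ is $(N)$-singular. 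The only delicate step is the infinite rank of $FPF^*$, which could conceivably fail were $F$ pathological on $P\hilb$; this is precisely where the $|H|^{1/2}$-compactness of $F$ is needed, so that $FP$ is well-defined, compact, and injective on the infinite-dimensional domain $P\hilb$.
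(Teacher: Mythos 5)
Your argument is essentially the paper's own proof: split $T_{\lambda+iy}(H)$ into the singular part $iy^{-1}FPF^*$ plus a remainder uniformly bounded for $y\in[0,1]$, note that $FPF^*$ is compact of infinite rank, and apply Fan's inequality \eqref{F: special case of Fans inequality} to force arbitrarily many $s$-numbers of $T_{\lambda+iy}(H)$ to blow up, giving $N(\lambda;H)=\infty$. The only difference is cosmetic (you verify in detail the step the paper calls ``easy to see,'' namely compactness and infinite rank of $FPF^*$ via the factorisation through $F(1+|H|)^{-1/2}$), so the proposal is correct and follows the same route.
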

\begin{proof}
We shall prove that if~$\lambda$ is an eigenvalue of infinite multiplicity for some $H \in \clA$ and an isolated point of spectrum of $H,$ 
then $N(\lambda, H) = \infty.$
Let $a > 0$ be such that $(\lambda - a, \lambda + a)$ contains only one point of spectrum of $H,$ namely the number~$\lambda.$
Let $P$ be the orthogonal projection onto the eigenspace $\clV_\lambda$ corresponding to the eigenvalue $\lambda.$
It is easy to see that the operator $FPF^*$ is compact and has infinite rank. 
Let 
$$
    G  := \int _{\mbR \setminus (\lambda - a, \lambda + a)} x\, dE_H(x),
$$
so that $H = \lambda P + G.$ With this notation, \eqref{F: special case of Fans inequality} implies
\begin{equation*} 
   \begin{split}
        s_n (T_{\lambda + iy }(H))   & =          s_n (T_{\lambda + iy }(G) + i y^{-1} F P F^*)  \\
        & \geq y^{-1}  s_{n+1} (FPF^*) - \sup_ {y \in [0,1]}  \| T_{\lambda + iy }(G) \|.
   \end{split}
\end{equation*} 
Clearly,  $\sup_ {y \in [0,1]}  \| T_{\lambda + iy }(G) \|$ is finite. Also, since $FPF^*$ is an infinite rank compact operator, infinitely many of its $s$-numbers go to $\infty$ as $ y \to 0^+.$
By the estimate above, the same can be said about $s$-numbers of  $T_{\lambda + iy }(H).$ In other words, $N(\lambda, H) = \infty.$
\end{proof}

I believe that this argument, property modified, should also work to prove that an $(I)$-singular number is $(N)$-singular. This is work in progress. 
\medskip 

\noindent
{\bf Remark.} Definition of $(N)$-singular numbers admits a modification: instead of $T_z(H) = F R_z(H) F^*$ one can use $ F^* F R_z(H).$ Using the other product, $R_z(H) F^*F$
does not give anything new since $s_n(A) = s_n(A^*).$
For $(L)$-singular numbers none of these modifications give anything new since $\spec(AB) \cup \{0 \} = \spec(BA) \cup \{0 \}.$ 

\subsection{$(F)$-singular numbers}
Let  $F$ and $\clA$ be as above.
\begin{defn}  
A real number~$\lambda$ is $(F)$-singular, if for some $H \in \clA$ 
$$
    \dim \bigcap _{O \ni \lambda}  \overline{\ran{F E_O(H)} } = \infty,
$$
where the intersection is over all open sets containing $\lambda.$
We also say that~$\lambda$ is a singular number of type $(F).$
\end{defn}

Recall that given a semi-regular point~$\lambda$ and $H \in \clA,$ the vector space  $\Upsilon^1_\lambda(H)$ is the finite dimensional space of solutions to the equation
$$
    ( 1 - T_{\lambda + i0 } (H + F^*JF) J ) u = 0,
$$
where $J$ is any bounded self-adjoint operator on $\clK$ for which $H+F^*JF$ is~$\lambda$-regular. Note that such $J$ exists by definition of semi-regularity, and the vector space 
$\Upsilon^1_\lambda(H)$ is independent of the choice of $J,$ see \cite{AzSFIES}.  Also note that the equation above is nothing else but  the homogeneous Lippmann-Schwinger equation,
see e.g. \cite{Agm,BeShu,TayST} for more information about it.

\begin{thm}   \label{T: F E(O) subset Upsilon}
Let~$\lambda$ be a semi-regular point. Then for any $H \in \clA$ we have 
\begin{equation} \label{F: bigcap F E subset Upsilon}
     \bigcap _{O \ni \lambda}  \overline{\ran{F E_O(H)} } \subset \Upsilon^1_\lambda(H).
\end{equation}
\end{thm}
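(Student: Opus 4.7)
The objective is to show that every $v \in V := \bigcap_{O \ni \lambda} \overline{\ran(F E_O(H))}$ satisfies the homogeneous Lippmann--Schwinger equation $(I - T_{\lambda + i0}(H_J) J) v = 0$ for any bounded self-adjoint $J$ on $\clK$ such that $H_J := H + F^*JF$ is $\lambda$-regular. Such $J$ exists by the semi-regularity of $\lambda$, and by the paper's earlier remark on $\Lambda(H)$, $\lambda$ is not an eigenvalue of $H_J$; in particular $E_{\{\lambda\}}(H_J) = 0$. The algebraic backbone is the identity
\begin{equation*}
(I - T_z(H_J) J)\, F x \;=\; F R_z(H_J)(H - z)\, x, \qquad x \in \dom H,\ \operatorname{Im} z > 0,
\end{equation*}
obtained from $R_z(H_J)(H - z) = I - R_z(H_J) F^* J F$ by pre-composing with $F$.

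Given $v \in V$, fix a shrinking sequence $O_n \downarrow \{\lambda\}$ and $x_n \in E_{O_n}(H)\hilb$ with $v_n := F x_n \to v$ in $\clK$. Consider first the tractable case $\sup_n \|x_n\| < \infty$. Passing to a weakly convergent subsequence, $x_n \to x_*$ weakly in $\hilb$; testing against $\phi \in \dom H$ (dense in $\hilb$) and using $E_{O_n}(H) \phi \to E_{\{\lambda\}}(H) \phi$ in norm shows $x_* \in E_{\{\lambda\}}(H)\hilb$, so $H x_* = \lambda x_*$. Since $F$ is closed, its graph is weakly closed by Mazur, giving $x_* \in \dom F$ and $F x_* = v$. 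A direct computation then yields
\begin{equation*}
T_z(H_J) J v = F R_z(H_J) F^* J F x_* = F R_z(H_J)(H_J - \lambda) x_* = v + iy\, F R_z(H_J) x_*,
\end{equation*}
and the remainder vanishes as $y \to 0^+$: by dominated convergence (using $x_* \in \dom H \subset \dom |H_J|^{1/2}$), $\|(1+|H_J|)^{1/2} iy R_z(H_J) x_*\| \to 0$ because $E_{\{\lambda\}}(H_J) = 0$, and the relative $|H_J|^{1/2}$-boundedness of $F$ (coming from compactness of $F(1+|H_J|)^{-1/2}$) propagates the convergence to $\clK$. Hence $T_{\lambda+i0}(H_J) J v = v$.

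The main obstacle is the unbounded case $\|x_n\| \to \infty$, which arises genuinely when $\lambda \in \sigma_{ess}(H) \setminus \sigma_{pp}(H)$ and the compact operator $F E_{O_n}(H)$ has singular values tending to zero: the weak-limit argument then forces $x_* = 0$ and hence $v = 0$, giving no information about non-trivial $v$. To handle this case I would reparametrize using density of $F^* \clK$ in $\hilb$ (since $\ker F = \{0\}$) together with boundedness of $F E_O(H)$ on $E_O(H) \hilb$ (from compactness of $F(1 + |H|)^{-1/2}$); these give $\overline{\ran(F E_O(H))} = \overline{F E_O(H) F^* \clK}$, allowing the choice $x_n = E_{O_n}(H) F^* w_n$, whereupon Stone's formula yields $v_n = \tfrac{1}{\pi}\int_{O_n}\operatorname{Im} T_{\mu+i0}(H) w_n\, d\mu$. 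Substituting the a.e.-valid identity $T_{\mu+i0}(H) = T_{\mu+i0}(H_J) + T_{\mu+i0}(H_J) J T_{\mu+i0}(H)$ inside the integral and regrouping leads to
\begin{equation*}
(I - T_{\lambda+i0}(H_J) J) v_n = F E_{O_n}(H_J) F^* w_n + R_n,
\end{equation*}
where $R_n$ is an error involving the increment $T_{\mu+i0}(H_J) - T_{\lambda+i0}(H_J)$ for $\mu \in O_n$. The first summand tends pointwise to zero because $F E_{\{\lambda\}}(H_J) F^* = 0$, though making this rigorous for varying $w_n$ requires care; and $R_n$ would tend to zero under an appropriate continuity or uniform-LAP property of $T_{\cdot + i0}(H_J)$ at $\lambda$ --- the most delicate technical point, as LAP yields only a.e.\ convergence in $\mu$. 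Given that $\Upsilon^1_\lambda(H)$ is finite-dimensional by semi-regularity (hence closed), the conclusion $v \in \Upsilon^1_\lambda(H)$ would follow.
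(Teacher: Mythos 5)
Your proposal is not complete where it matters. The bounded case you settle in full only produces vectors of the form $v=Fx_*$ with $x_*\in\ker(H-\lambda)$, i.e.\ the degenerate part of the statement; the generic situation $\lambda\in\sigma_{ess}$ with $\|x_n\|\to\infty$ is exactly the content of the theorem, and there your argument rests on two ingredients that the hypothesis ``$\lambda$ is semi-regular'' does not supply. First, your Stone-formula representation needs the boundary values $T_{\mu+i0}(H)$ and $T_{\mu+i0}(H_J)$ to exist for a.e.\ $\mu$ in a neighbourhood of $\lambda$; that is LAP, which is not part of the hypothesis (semi-regularity only asserts existence of the single vertical limit $T_{\lambda+i0}(H_J)$ for one $J$). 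Second, and fatally, your error term $R_n$ requires continuity, or at least some locally uniform behaviour, of $\mu\mapsto T_{\mu+i0}(H_J)$ at $\lambda$; nothing of the sort is available --- even when the boundary values exist a.e.\ they need not be continuous, and the exceptional set can cluster at $\lambda$. So the step you yourself flag as ``the most delicate technical point'' is a genuine missing idea, not a technicality to be smoothed over, and the same applies to controlling the varying vectors $w_n$ in the term $FE_{O_n}(H_J)F^*w_n$.

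The paper closes precisely this gap using the identity you already wrote as your algebraic backbone, but exploited quantitatively rather than through weak limits or Stone's formula. For $f\in E_{(\lambda-y,\lambda+y)}(H)\hilb$ one has $Ff-T_{\lambda+iy}(H_J)JFf=FR_{\lambda+iy}(H_J)\bigl[(H-\lambda)f-iyf\bigr]$, and the two factors are balanced against each other: $\|(H-\lambda)f\|\le y\|f\|$ because the spectral window has width $y$, while $\|FR_{\lambda+iy}(H_J)\|=y^{-1/2}\bigl\|\sqrt{\Im T_{\lambda+iy}(H_J)}\bigr\|=O(y^{-1/2})$, the last bound being a consequence of $\lambda$-regularity of $H_J$ alone. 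Hence this term is $O(y^{1/2}\|f\|)$, with no weak compactness of the approximants, no a.e.\ LAP near $\lambda$, and no continuity in $\mu$ required; the remaining terms in $\|v-T_{\lambda+i0}(H_J)Jv\|$ are handled by choosing $Ff$ close to $v$ and then $y$ small, using only norm convergence $T_{\lambda+iy}(H_J)\to T_{\lambda+i0}(H_J)$. In short, the $y^{-1/2}$ estimate on $FR_{\lambda+iy}(H_J)$, coupled with shrinking the spectral window at rate $y$, is the device that neutralizes the unboundedness of the approximating vectors which derailed your attempt; I recommend reworking your argument around that estimate.
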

\begin{proof}
In the proof  I  write $H_0$ for $H.$ Since $\lambda$ is semi--regular, there exists a bounded self-adjoint operator $J$ on $\clK$ such that $H_1 := H_0 + V$ is~$\lambda$-regular, where $V =  F^*JF.$
Recall that this means by definition that $T_{\lambda+iy}(H_1)$ converges in norm as $y \to 0^+,$ with the limit denoted $T_{\lambda+i0}(H_1).$ 
Let $u$ be an element of 
$$
      \bigcap _{O \ni \lambda}  \overline{\ran{F E_O(H_0)} }. 
$$
We show that $u$ is a solution to the homogeneous Lippman-Schwinger equation 
$$
    ( 1 - T_{\lambda + i0 }(H_1) J) u = 0.
$$
by showing  that $\|u - T_{\lambda+i0}(H_1)Ju\| < \varepsilon$ for any $\varepsilon>0.$
For any $y>0$ and $f \in \hilb,$ we have
\begin{align*}\label{F: triangles}
 \|u - T_{\lambda+i0}(H_1)Ju\|  
 & \leq  \|u - Ff
 \\ & \qquad + Ff - T_{\lambda+iy}(H_1)JFf
\\&\qquad +  ( T_{\lambda+iy}(H_1) - T_{\lambda+i0}(H_1) ) J Ff
\\&\qquad + T_{\lambda+i0}(H_1)J ( Ff - u ) \|. 
 \\ 
 & \leq  \|u - Ff\|
 \\ & \qquad + \|Ff - T_{\lambda+iy}(H_1)JFf\| 
\\&\qquad + \|T_{\lambda+iy}(H_1) - T_{\lambda+i0}(H_1)\|\| J  F f \| 
\\&\qquad + \|T_{\lambda+i0}(H_1)J\| \|Ff - u\|. 
\end{align*}
Since $u \in \bigcap_{\lambda \in O} \overline{F E_O(H_0)\hilb},$ for any neighbourhood $O$ of~$\lambda$ there exists a vector~$f$ from $E_O(H_0)\hilb$ 
such that $$(1 + \|T_{\lambda+i0}(H_1)J\|) \|Ff - u\| < \eps/3.$$ 
This takes care of the first and the fourth summands in the estimate above. 

Also, since $H_1$ is $\lambda$-regular, 
for all small enough $y$  the third summand is less than $\eps/3.$ 

Now we estimate the second summand which would complete the proof.
For $y>0,$ we choose $O$ to be the $y$-neighbourhood of $\lambda.$
We have
\begin{equation} \label{F: |FR(y)|<}
\|FR_{\lambda+iy}(H_1)\|  
  = \| \left|R_{\lambda-iy}(H_1)F^*\right| \|
  = y^{-1/2}  \Big\| \sqrt{\Im T_{\lambda+iy}(H_1)} \Big\|.
\end{equation}
The second term we rewrite as follows: 
\begin{equation} \label{F: second term}
  \begin{split}
Ff - T_{\lambda+iy}(H_1)JFf 
  &= FR_{\lambda+iy}(H_1)\left[(H_1 - \lambda - iy)f - Vf \right] 
\\&= FR_{\lambda+iy}(H_1)\left[ ( H_0 - \lambda) f - i y f \right].
  \end{split}
\end{equation}
By (\ref{F: |FR(y)|<}) and $\lambda \in \Lambda(H_1,F),$
the term $iyFR_{\lambda+iy}(H_1)f$ vanishes as $y \to 0^+.$ 
Since  $f \in E_{ ( \lambda - y, \lambda + y) }(H_0)\hilb,$
we have $\| (H_0 - \lambda) f \| \leq y \| f \|.$ Using this, 
the norm of the other summand can be estimated as follows:  
\begin{equation*} 
  \begin{split}
       \norm{FR_{\lambda+iy}(H_1) ( H_0 - \lambda ) f }    &  \leq   \norm{FR_{\lambda+iy}(H_1)} \norm{  ( H_0 - \lambda ) f }  \\ 
        & \leq  \norm{FR_{\lambda+iy}(H_1)} \cdot y \| f \|.
  \end{split}
\end{equation*}
Hence, by \eqref{F: |FR(y)|<} the second summand also vanishes. 
\end{proof}

Theorem \ref{T: F E(O) subset Upsilon} allows to prove the following 
\begin{thm} Any $(F)$-singular real number is essentially singular. 
\end{thm}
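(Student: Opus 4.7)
The plan is to argue by contraposition, directly reducing the claim to Theorem \ref{T: F E(O) subset Upsilon}. Suppose $\lambda$ is semi-regular; I want to show that $\lambda$ cannot be $(F)$-singular, that is, for every $H \in \clA$ the subspace
$$
    \bigcap_{O \ni \lambda} \overline{\ran F E_O(H)}
$$
is finite-dimensional.

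Fix $H \in \clA$. By Theorem \ref{T: F E(O) subset Upsilon}, which is applicable because $\lambda$ is assumed semi-regular, we have the inclusion
$$
    \bigcap_{O \ni \lambda} \overline{\ran F E_O(H)} \subset \Upsilon^1_\lambda(H).
$$
The space $\Upsilon^1_\lambda(H)$ was recalled just before Theorem \ref{T: F E(O) subset Upsilon} to be finite-dimensional (as the kernel of $1 - T_{\lambda+i0}(H+F^*JF)J$ for any $J$ making $H+F^*JF$ be $\lambda$-regular, where $T_{\lambda+i0}(H+F^*JF)J$ is compact so the kernel of $1 - (\cdot)$ is finite-dimensional, and independent of $J$). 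Hence the left-hand side is finite-dimensional as well, proving that $\lambda$ is not $(F)$-singular. Taking the contrapositive yields the theorem.

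Since the work has already been done in Theorem \ref{T: F E(O) subset Upsilon}, there is no real obstacle; the only thing to make sure of is that the finite dimensionality of $\Upsilon^1_\lambda(H)$ is invoked explicitly, since this is the only ingredient beyond the inclusion that is being used.
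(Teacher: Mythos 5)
Your argument is correct and is essentially the paper's own proof: the paper argues by contradiction (assume an $(F)$-singular $\lambda$ is semi-regular and invoke the inclusion of Theorem \ref{T: F E(O) subset Upsilon} together with the finite dimensionality of $\Upsilon^1_\lambda(H)$), which is just the contrapositive phrasing of what you wrote. Your explicit remark on why $\Upsilon^1_\lambda(H)$ is finite-dimensional is a harmless addition; the paper simply cites this as known.
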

\begin{proof} Assume the contrary, --- an $(F)$-singular number~$\lambda$ is semi-regular.
Then for some $H \in \clA$ the vector space 
$
      \bigcap _{O \ni \lambda}  \overline{\ran{F E_O(H)} }
$
has infinite dimension. Since~$\lambda$ is semi-regular, the vector subspace $\Upsilon^1_\lambda(H)$ is well-defined and has a finite dimension.
Hence, Theorem \ref{T: F E(O) subset Upsilon} gives us a contradiction. 
\end{proof}

\begin{thm} Any $(I)$-singular real number is $(F)$-singular. 
\end{thm}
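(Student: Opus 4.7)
The plan is to exhibit, for the very operator $H \in \clA$ that has $\lambda$ as an eigenvalue of infinite multiplicity, an explicit infinite-dimensional subspace of $\bigcap_{O \ni \lambda} \overline{\ran{F E_O(H)}}$. The obvious candidate is $\overline{\ran{F P_\lambda}}$, where $P_\lambda := E_{\{\lambda\}}(H)$ is the orthogonal projection onto the eigenspace $\clV_\lambda$ of $H$ at $\lambda$.

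First I would verify the containment
$$
   \overline{\ran{F P_\lambda}} \subset \bigcap_{O \ni \lambda} \overline{\ran{F E_O(H)}}.
$$
This is immediate from the identity $E_O(H) P_\lambda = P_\lambda$, valid for every open $O$ containing $\lambda$: it yields $F P_\lambda = F E_O(H) P_\lambda$, so $\ran{F P_\lambda} \subset \ran{F E_O(H)}$, and then taking closures and intersecting over all such $O$ gives the claim.

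Next I would show $\dim \overline{\ran{F P_\lambda}} = \infty$. Two ingredients are needed. Firstly, $F$ must be defined on $\clV_\lambda$: by the standing compatibility assumption $F(1+|H|)^{-1/2}$ is compact, hence bounded, so $\dom(F) \supset \dom(|H|^{1/2}) \supset \clV_\lambda$; moreover, since $(1+|H|)^{-1/2}$ acts on $\clV_\lambda$ as multiplication by the positive constant $(1+|\lambda|)^{-1/2}$, the restriction $F|_{\clV_\lambda}$ is in fact bounded. Secondly, $F|_{\clV_\lambda}$ is injective, because $F$ has zero kernel by the definition of a rigged Hilbert space. An injective linear map from an infinite-dimensional space has infinite-dimensional image, and passing to the closure preserves infinite-dimensionality; this completes the argument.

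I do not anticipate any genuine obstacle: the whole proof is a matching of definitions, exploiting that a spectral projection $E_O(H)$ dominates $P_\lambda$ whenever $\lambda \in O$, combined with the zero-kernel assumption on the rigging. The only point worth pausing over is the verification that $F$ is defined on $\clV_\lambda$, for which the compatibility hypothesis on $(H,F)$ is precisely tailored.
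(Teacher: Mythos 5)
Your proposal is correct and is essentially the paper's own argument: the paper likewise observes that $F\clV_\lambda \subset \bigcap_{O\ni\lambda}\overline{\ran{F E_O(H)}}$ and concludes via the zero-kernel property of $F$ that this space is infinite-dimensional. Your additional checks (that $E_O(H)P_\lambda = P_\lambda$ gives the containment, and that $F$ is defined on $\clV_\lambda$ by the compatibility assumption) are just a more explicit writing-out of the same steps.
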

\begin{proof} Let a real number~$\lambda$ be $(I)$-singular, that is, 
for some $H \in \clA$  the number~$\lambda$ is an eigenvalue of infinite multiplicity. Let  $\clV_\lambda$  be the corresponding eigenspace.
We have  
$$
     F \clV_\lambda \subset    \bigcap _{O \ni \lambda}  \overline{\ran{F E_O(H)} },
$$
and since $F$ has zero kernel, the left hand side has infinite dimension. Hence, so does the right hand side. 
\end{proof}

\begin{conj}  \label{C9}  For a semi-regular number~$\lambda$ 
and any $H \in \clA,$
the inclusion in \eqref{F: bigcap F E subset Upsilon} can be replaced by equality:
$$
     \bigcap _{O \ni \lambda}  \overline{\ran{F E_O(H)} }  =   \Upsilon^1_\lambda(H).
$$
\end{conj}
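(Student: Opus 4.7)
The $\subset$ direction is Theorem \ref{T: F E(O) subset Upsilon}, so I will concentrate on the reverse inclusion $\Upsilon^1_\lambda(H) \subset \bigcap_{O \ni \lambda} \overline{\ran F E_O(H)}$. Fix $u \in \Upsilon^1_\lambda(H)$ and choose a bounded self-adjoint $J$ on $\clK$ for which $H_1 := H + F^*JF$ is $\lambda$-regular; by definition, $u = T_{\lambda+i0}(H_1) J u$. For an arbitrary open neighbourhood $O$ of $\lambda$, my aim is to exhibit a family $\{f_y\}_{y > 0} \subset E_O(H)\hilb$ with $Ff_y \to u$ in $\clK$ as $y \to 0^+$.

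The engine of the construction is the identity obtained by sandwiching the second resolvent equation $R_{\lambda+iy}(H_1) = R_{\lambda+iy}(H) - R_{\lambda+iy}(H) F^*JF R_{\lambda+iy}(H_1)$ between $F$ and $F^*$ and rearranging:
$$T_{\lambda+iy}(H) J\bigl[u - T_{\lambda+iy}(H_1) J u\bigr] = T_{\lambda+iy}(H_1) J u.$$
By $\lambda$-regularity of $H_1$ the right-hand side converges in norm to $T_{\lambda+i0}(H_1) J u = u$, while on the left the bracket $u - T_{\lambda+iy}(H_1) J u$ tends to $0$ in $\clK$.

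Pick $\delta > 0$ with $[\lambda-\delta,\lambda+\delta] \subset O$, set $\chi := \chi_{[\lambda-\delta,\lambda+\delta]}$, and define
$$f_y := \chi(H) \, R_{\lambda+iy}(H) F^* J\bigl[u - T_{\lambda+iy}(H_1) J u\bigr] \in E_{[\lambda-\delta,\lambda+\delta]}(H)\hilb \subset E_O(H)\hilb.$$
Splitting $R_{\lambda+iy}(H) = \chi(H) R_{\lambda+iy}(H) + (1-\chi(H)) R_{\lambda+iy}(H)$ inside the previous identity writes $T_{\lambda+iy}(H_1) Ju = Ff_y + F(1-\chi(H))R_{\lambda+iy}(H)F^* J\bigl[u - T_{\lambda+iy}(H_1) Ju\bigr]$. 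To conclude $Ff_y \to u$ it therefore suffices to show that the complementary summand vanishes in the limit. This holds because $F(1-\chi(H))R_{\lambda+iy}(H)F^*$ converges in norm as $y \to 0^+$: its functional-calculus symbol $(1-\chi(x))(x-\lambda-iy)^{-1}$ converges uniformly on $\mbR$ with rate $O(y/\delta^2)$, and the uniform symbol convergence upgrades to norm convergence of the sandwiched operators via the factorisation $F\phi(H)F^* = [F(1+|H|)^{-1/2}] \cdot [(1+|H|)\phi(H)] \cdot [(1+|H|)^{-1/2}F^*]$ with compact outer factors (by $|H|^{1/2}$-compactness of $F$). Since $J[u - T_{\lambda+iy}(H_1)Ju] \to 0$ in $\clK$, the complementary summand vanishes and $Ff_y \to u$, so $u \in \overline{\ran F E_O(H)}$.

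The main technical point to watch is that $f_y$ is a genuine element of $\hilb$ and that $\chi(H)$ truly places it inside $E_O(H)\hilb$. Both reduce to the observation that for each $y > 0$ the operator $R_{\lambda+iy}(H)F^*$ is bounded from $\clK$ into $\hilb$ --- directly by the same factorisation, noting that $(1+|x|)^{1/2}/|x-\lambda-iy|$ is bounded on $\mbR$ for each fixed $y > 0$. Once this bookkeeping is in place, the heart of the proof is the elementary identity of the second paragraph together with the uniform control on the ``far from $\lambda$'' functional calculus of $H$; crucially, neither piece requires $\lambda$-regularity of $H$ itself, only of the perturbed operator $H_1$, which is precisely what semi-regularity of $\lambda$ provides.
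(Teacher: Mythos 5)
First, a point of orientation: the statement you proved is stated in the paper as a \emph{conjecture}. The paper only proves the inclusion $\bigcap_{O\ni\lambda}\overline{\ran FE_O(H)}\subset\Upsilon^1_\lambda(H)$ (Theorem \ref{T: F E(O) subset Upsilon}); for the reverse inclusion the author merely remarks that he can prove it under the extra hypothesis that $\int_{\mbR}\|T_{x+iy}(H)\|(1+|x|^2)\,dx$ is finite uniformly in $y\in[0,1]$. So there is no proof in the paper to compare yours with, and your argument, if sound, is an unconditional proof of the open direction. Having checked it, I do not find a genuine gap. The sandwiched second resolvent identity $T_z(H_1)=T_z(H)-T_z(H)JT_z(H_1)$ is valid in this framework (it is the basic identity of \cite{AzSFIES}), and applying it to $Ju$ gives exactly your rearrangement $T_z(H)J\bigl[u-T_z(H_1)Ju\bigr]=T_z(H_1)Ju$; the bracket tends to $0$ and the right side tends to $u$ by $\lambda$-regularity of $H_1$ and the equation $u=T_{\lambda+i0}(H_1)Ju$; and the complementary summand dies because the far-from-$\lambda$ block of the resolvent is uniformly controlled. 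Your route is genuinely different from the author's announced conditional one: you need no global information on $\|T_{x+iy}(H)\|$ along the real line, only $\lambda$-regularity of the single perturbed operator $H_1$ plus the trivial bound away from $\lambda$, which is precisely what semi-regularity supplies.

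Three points deserve care in a written version. (i) In the factorisation $F\phi(H)F^*=[F(1+|H|)^{-1/2}]\,[(1+|H|)\phi(H)]\,[(1+|H|)^{-1/2}F^*]$ the quantity you must bound and pass to the limit is the weighted symbol $(1+|x|)(1-\chi(x))|x-\lambda-iy|^{-1}$, not the unweighted one you quote; it is bounded by $1+(1+|\lambda|)/\delta$ uniformly in $y$ and converges uniformly at rate $O(y)$, so the conclusion stands, but your stated rate $O(y/\delta^2)$ refers to the wrong function. In fact you need less: uniform boundedness of $\|F(1-\chi(H))R_{\lambda+iy}(H)F^*\|$ for $y\in(0,1]$ already suffices, since it multiplies a vector tending to $0$. (ii) Since $F^*JF$ is in general only a form-type perturbation, the second resolvent equation should be invoked directly in its sandwiched form (which is what you actually use and what is established in \cite{AzSFIES}), rather than as an identity between unbounded operators. (iii) The bookkeeping for $f_y$ should be phrased via the bounded closure of $(1+|H|)^{-1/2}F^*$ and the inclusion $\dom(1+|H|)^{1/2}\subset\dom F$, so that $f_y\in E_O(H)\hilb\cap\dom F$ and $Ff_y$ coincides with the bounded-extension expression; you indicate this, and it works, but since a correct argument here settles a conjecture of the paper, these details should be written out explicitly.
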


\noindent {\bf Remark.}  I know how to prove Conjecture \ref{C9} if 
$$
    \int_{\mbR} \| T_{ x + iy }  (H)  \| (1+|x|^2) \, dx < \infty
$$
uniformly in $y \in [0,1].$ In particular, this assumes that LAP holds for~$H.$

\section{More open problems}

In this section I state some open problems. 

\subsection{On relation of different types of essentially singular numbers}

\begin{enumerate}
   \item Find an example of an essentially singular number, which is not (a) $(I)$-singular, 
   (b) $(N)$-singular, (c) $(L)$-singular, (d) $(F)$-singular, or prove that such numbers do not exist. 
   \item Find all possible relations between these four types of essential singular numbers. For instance, (a) is it true that singular numbers of types $(N)$ and $(L)$  are identical?
   (b) is it true that every $(I)$-singular number is $(N)$-singular?
   (c) is it true that every $(I)$-singular number is $(L)$-singular?
\end{enumerate}

\subsection{$(R)$-singular numbers}
In addition to previous types of essentially singular numbers, I will define one more type. 
\begin{defn}  Let $\clA$ and $F$ be as above, and
let~$\lambda$ be a real number. We say that~$\lambda$ is a singular point of type $(R),$ or an $(R)$-singular number, if 
for at least one $H \in \clA,$ 
as $\lambda + iy$ approaches to $\lambda+i0$ at least one coupling resonance function
$r_j(z)$ of the pair $H, F^*F$ does not converge in the extended complex plane $\bar \mbC.$ 
\end{defn}
The definition of $(R)$-singular number probably requires a bit more care since the coupling resonance functions can have branching and absorbing points, see \cite{AzSFIESV}, 
but this is not a difficult problem. 
It is obvious that an $(R)$-singular number is essentially singular. 
I believe that $(R)$-singular numbers should be quite abundant,
but at the moment I don't have examples of $(R)$-singular numbers.

I believe the following conjecture is closely related to $(R)$-singular numbers.
\begin{conj} Let $\clA$ and $F$ be as above. 
Let $\lambda \in \mbR.$
If for some $H \in \clA$ and some $\psi \in \hilb$ the limit
$$
    \lim _ {y \to 0^+}  \Scal { \psi , T_{\lambda + iy} (H) \psi}
$$
does not exist in the extended complex plane, then the real number $\lambda$ is essentially singular. 
\end{conj}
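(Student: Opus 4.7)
The plan is to argue by contrapositive: assume $\lambda$ is semi-regular and deduce that for every $H\in\clA$ and every $\psi$, the radial limit $\lim_{y\to 0^+}\langle\psi,T_{\lambda+iy}(H)\psi\rangle$ exists in $\overline{\mbC}$. By Theorem~\ref{T2}, choose $s\in\mbR$ such that $H_s:=H+sF^*F$ is $\lambda$-regular, so that $T_0:=T_{\lambda+i0}(H_s)$ exists in norm and is compact on $\clK$. The second resolvent identity gives, for $\Im z>0$,
$$
T_z(H)=T_z(H_s)\bigl(I-sT_z(H_s)\bigr)^{-1},
$$
and the issue is the behaviour of the inverse factor as $z\to\lambda+i0$.

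Apply the Fredholm alternative to $I-sT_0$. If $1/s\notin\spec T_0$, then $(I-sT_z(H_s))^{-1}\to(I-sT_0)^{-1}$ in norm, so $T_z(H)$ converges in norm and the scalar limit exists in $\mbC$. Otherwise $1/s$ is an eigenvalue of $T_0$ of finite algebraic multiplicity $n$; let $P$ be its Riesz projection and let $P_y$ be the Riesz projection of $T_{\lambda+iy}(H_s)$ onto the spectrum close to $1/s$. Then $P_y\to P$ in norm with $\rank P_y=n$ for small $y$, and $P_y$ commutes with $T_z(H_s)$ and hence with $T_z(H)$. Splitting $T_z(H)=T_z(H)(I-P_y)+T_z(H)P_y$, the first summand converges in norm on $\ran(I-P_y)$ because the spectrum of $sT_z(H_s)$ stays uniformly bounded away from $1$ there; consequently $\langle\psi,T_z(H)(I-P_y)\psi\rangle$ has a limit in $\mbC$, and only the finite-dimensional block $\langle\psi,T_z(H)P_y\psi\rangle$ remains to analyse.

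Transport a basis of $\ran P$ to $\ran P_y$ via $P_y$, let $M(z)$ denote the matrix of $T_z(H_s)|_{\ran P_y}$ in that basis, and let $\vec c(z),\vec d(z)$ be the coordinate vectors associated with $P_y\psi$ and $P_y^{*}\psi$. All of these are holomorphic in $z$ on $\{\Im z>0\}$, extend continuously to $z=\lambda$, and every eigenvalue of $M(\lambda)$ equals $1/s$. By Cramer's rule,
$$
\langle\psi,T_z(H)P_y\psi\rangle=\vec d(z)^{*}\bigl(I-sM(z)\bigr)^{-1}M(z)\,\vec c(z)=\frac{N(z)}{D(z)},
$$
with $N,D$ holomorphic on $\{\Im z>0\}$, continuous up to $\lambda$, and $D(\lambda)=\det(I-sM(\lambda))=0$.

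The main obstacle is to show that $N(z)/D(z)$ admits a limit in $\overline{\mbC}$ as $z\to\lambda+i0$. When $N(\lambda)\neq 0$ one has $|N/D|\to\infty$ and the limit is $\infty$, finishing the argument. The subtle case is $N(\lambda)=0$, an indeterminate $0/0$: two analytic functions on the upper half-plane continuous to a real boundary point with a common boundary zero can in principle have a ratio with no cluster value in $\overline{\mbC}$. The route I would pursue is to combine the Cramer decomposition with the Herglotz property of the entire scalar $f(z)=\langle\psi,T_z(H)\psi\rangle$, which is the Stieltjes transform of the positive spectral measure $d\langle F^{*}\psi,E_H(\cdot)F^{*}\psi\rangle$: if $\Im f(\lambda+iy)$ is unbounded then $|f|\to\infty$ and the limit in $\overline{\mbC}$ is $\infty$ automatically, whereas boundedness of $\Im f$ rules out an atom of the measure at $\lambda$ and strongly constrains its local Poisson-integral behaviour. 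Turning this into an unconditional proof that $N/D$ cannot oscillate looks genuinely difficult, and by analogy with the Remark after Conjecture~\ref{C9} I expect that some uniform integrability hypothesis on $T_{x+iy}(H)$ is needed to close the argument; this is where I would place the heart of the remaining difficulty.
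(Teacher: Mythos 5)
The statement you are trying to prove is not proved in the paper at all: it is stated there as an open conjecture (the author only remarks that knowledge of the asymptotics of the coupling resonances $r_j(\lambda+iy)$ and of their associated idempotents seems to be the key to it). So there is no paper proof to compare against, and the relevant question is whether your argument closes the problem. It does not, and you say so yourself. The reduction you perform is sound in spirit and in fact mirrors the machinery already present in the paper: choosing $s$ with $H_s=H+sF^*F$ $\lambda$-regular via Theorem~\ref{T2}, writing $T_z(H)=(I-sT_z(H_s))^{-1}T_z(H_s)$, and isolating a finite-rank spectral block of $T_z(H_s)$ near the eigenvalue $1/s$ of $T_{\lambda+i0}(H_s)$ is exactly the decomposition into a regular part plus a finite-rank pole part governed by finitely many coupling resonances, as in the proof of Theorem~\ref{T: for semi-regular N(lambda) = 0}. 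Up to that point the steps are essentially correct (norm convergence of the Riesz projections, commutation, convergence of the off-block part).

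The genuine gap is the one you flag: after Cramer's rule you must show that $N(z)/D(z)$ has a limit in $\overline{\mbC}$ as $z\to\lambda+i0$, and in the indeterminate case $N(\lambda)=D(\lambda)=0$ nothing you have written rules out oscillation. This is not a technicality to be patched; it is the entire content of the conjecture. In the paper's language, the question is whether the finitely many coupling resonance functions entering the block $M(z)$ (equivalently, the eigenvalues of $T_z(H_s)$ converging to $1/s$) and their idempotents can behave so erratically near $\lambda+i0$ that the quadratic form oscillates --- precisely the phenomenon behind the paper's $(R)$-singular numbers and the absorbing/branching behaviour discussed in the cited part V. The Herglotz fallback does not close it either: positivity of $\Im\langle\psi,T_z(H)\psi\rangle$ only gives existence of the nontangential limit for a.e.\ boundary point, not at the fixed $\lambda$ in question, and boundedness of the imaginary part along the ray does not prevent the real part (or the full value) from oscillating without a limit in $\overline{\mbC}$; excluding that would require exactly the kind of additional uniform integrability hypothesis you mention, which the conjecture does not grant. (A small side remark: for the Stieltjes-transform interpretation you use one needs $\psi\in\dom F^*$, though the Herglotz property of $z\mapsto\langle\psi,T_z(H)\psi\rangle$ itself follows directly from $\Im T_z(H)\geq 0$.) So your proposal is a reasonable reduction of the conjecture to its known hard core, but it is not a proof.
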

If this conjecture is true, it would provide plenty of examples of essentially singular points. 
A knowledge of asymptotic behaviour of coupling resonances $r_j(\lambda+iy)$ and of their associated idempotents $P_{\lambda+iy}(r_j(\lambda+iy)),$
see \cite{AzSFIES},
 seems to be important in tackling this conjecture. 

\bigskip\bigskip 
{\it Acknowledgements.} I thank my wife for financially supporting me during the work on this paper.

\end{document}